\definecolor{citation}{rgb}{0.2,0.58,0.2} 
\definecolor{formula}{rgb}{0.1,0.2,0.6}
\definecolor{url}{rgb}{0.3,0,0.5} 
\def\vs{\vspace{1mm}}
\def\dx{\,{\rm d}x}
\def\dy{\,{\rm d}y}
\def\dist{\,{\rm dist}}
\DeclareRobustCommand*{\bfseries}{%
	\not@math@alphabet\bfseries\mathbf
	\fontseries\bfdefault\selectfont
	\boldmath
}
\newlength{\defbaselineskip}
\newcommand{\setlinespacing}[1]
{\setlength{\baselineskip}{#1 \defbaselineskip}}
\newcommand{\dashint}{\mathop{\int\hskip -1,05em -\, \!\!\!}\nolimits}
\def\Xint#1{\mathchoice
	{\XXint\displaystyle\textstyle{#1}}%
	{\XXint\textstyle\scriptstyle{#1}}%
	{\XXint\scriptstyle\scriptscriptstyle{#1}}%
	{\XXint\scriptscriptstyle\scriptscriptstyle{#1}} %
	\!\int}
\def\XXint#1#2#3{{\setbox0=\hbox{$#1{#2#3}{\int}$}
		\vcenter{\hbox{$#2#3$}}\kern-.5\wd0}}
\def\dashint{\Xint-}
\newtheorem{theorem}{Theorem}[section]
\newtheorem{lemma}{Lemma}[section]
\newtheorem{proposition}{Proposition}[section]
\numberwithin{equation}{section}
\def\loc{\operatorname{loc}}
\newcommand{\kk}{\kappa}
\def\er{\mathds R}
\newcommand{\ern}{\mathds{R}^n}
\newcommand{\ti}[1]{\tilde{#1}}
\newcommand{\erN}{\mathds{R}^N}
\newcommand\eps\varepsilon
\newcommand{\ernN}{\mathds{R}^{N\times n}}
\newcommand{\data}{\textnormal{\texttt{data}}}
\newcommand{\be}{\begin{equation}}
\newcommand{\ee}{\end{equation}}
\newcommand{\rr}{\rho}
\newcommand{\modulo}[1]{{\left|#1\right|}}
\newcommand{\norma}[1]{{\left\|#1\right\|}}
\newcommand{\snr}[1]{\lvert #1\rvert}
\newcommand{\nr}[1]{\lVert #1 \rVert}
\newcommand{\sss}{\varsigma}
\newcommand{\F}{\varphi}
\newcommand{\dd}{D}
\newcommand\ap{``}
\title[Partial regularity for quasiconvex integrals]{A limiting case in partial regularity \\for quasiconvex functionals}
\author[M. Piccinini]{Mirco Piccinini}
\address[Mirco Piccinini]{Dipartimento di Matematica e Informatica, Universit\`a degli Studi di Parma, Campus - Parco Area delle Scienze, 53/a, 43124 Parma, Italy}
\email{\href{mailto:mirco.piccinini@unipr.it}{mirco.piccinini@unipr.it} }
\subjclass[2010]{35J60, 49J45, 49N60\vspace{1mm}} 
\keywords{Regularity, Quasiconvex functionals, Degenerate variational integrals\vspace{1mm}}
\thanks{{\it Acknowledgements.}\  The author is supported by INdAM Projects ``Fenomeni non locali in problemi locali",\!\char`_CUP\_E55F22000270001 and ``Problemi non locali: teoria cinetica e non uniforme ellitticit\`a'', CUP\_E53C220019320001, and also by the Project ``Local vs Nonlocal: mixed type operators and nonuniform ellipticity", \!\char`_CUP\_D91B21005370003.\\
		The authors is grateful to Cristiana De Filippis for suggesting the problem and for her fruitful observations and advices which helped improving the quality of the manuscript.
	\vspace{1mm}}
\begin{document}

\begin{abstract}
Local minimizers of nonhomogeneous quasiconvex variational integrals with standard $p$-growth of the type
$$
w\mapsto \int \left[F(Dw)-f\cdot w\right]\dx
$$
feature almost everywhere $\mbox{BMO}$-regular gradient provided that $f$ belongs to the borderline Marcinkiewicz space $L(n,\infty)$.
\vspace{3mm}
{\it
	\begin{center}
		Dedicated to Giuseppe Mingione \\ on the occasion of his $50^\mathrm{th}$ birthday, with admiration.
	\end{center}
}

\end{abstract}

\maketitle

{\footnotesize\setlinespacing{0.8}
	\setcounter{tocdepth}{2}
	\tableofcontents
}

\section{Introduction}\label{si}
In this paper we provide a limiting partial regularity criterion for vector-valued minimizers~$u : \Omega\subset \mathds{R}^{n} \to \erN$, $n\ge 2$, $N>1$, of nonhomogeneous, quasiconvex variational integrals as:
\begin{equation}\label{main_fnc}
	W^{1,p}(\Omega;\erN) \ni w \mapsto \mathcal{F}(w;\Omega):= \int_{\Omega}[F(D w) -f \cdot w ]\dx,
\end{equation}
with standard $p$-growth. More precisely, we infer the optimal \cite[Section 9]{kumig} $\varepsilon$-regularity condition
\begin{flalign*}
&\sup_{B_{\rho}\Subset \Omega}\rho^{m}\dashint_{B_{\rho}}\snr{f}^{m}\dx\lesssim \varepsilon\nonumber \\
&\qquad \qquad \qquad\qquad\quad   \Longrightarrow \ Du \ \mbox{has a.e. bounded mean oscillation},
\end{flalign*}
and the related borderline function space criterion
$$
f\in L(n,\infty) \ \Longrightarrow \ \sup_{B_{\rho}\Subset \Omega}\rho^{m}\dashint_{B_{\rho}}\snr{f}^{m}\dx\lesssim \varepsilon.
$$
This is the content of our main theorem.
\begin{theorem}\label{t1}
	Under assumptions \eqref{assf}$_{1,2,3}$, \eqref{sqc} and \eqref{f}, let $u\in W^{1,p}(\Omega,\mathds{R}^{N})$ be a local minimizer of functional \eqref{main_fnc}. Then, there exists a number~$\eps_*\equiv \eps_*(\data)>0$ such that if
	\begin{equation}\label{est_Lninfty}
     \|f\|_{L^{n,\infty}(\Omega)} \leqslant\left(\frac{\snr{B_1}}{4^{n/m}}\right)^{1/n}\eps_*,
	\end{equation}
    then there exists an open set $\Omega_u \subset \Omega$ with~$\snr{\Omega \setminus \Omega_u}=0$  such that 
	\begin{equation}\label{t1 1}
    \dd u \in BMO_{\loc}(\Omega_u; \ernN).
	\end{equation}
	Moreover, the set~$\Omega_u$ can be characterized as follows
	\begin{eqnarray}\label{t1 2}
	\Omega_u &:=& \left\{    x_0 \in \Omega : \exists \eps_{x_0}, \rr_{x_0} >0\right. \\*
	&& \qquad\qquad \left.\text{such that}  \  \mathcal{E}(u;B_\rr(x_0)) \leqslant\eps_{x_0} \ \text{for some} \ \rr \leqslant\rr_{x_0} \right\},\notag
	\end{eqnarray}
	where~$\mathcal{E}(\cdot)$ is the usual excess functional defined as
 \begin{equation}\label{eccess}
	\mathcal{E}(w,z_0;B_\rr(x_0)) := \left( \ \dashint_{B_\rr(x_0)} \snr{z_0}^{p-2} \snr{\dd w -z_0}^2 + \snr{\dd w - z_0}^p \ \dx \right)^\frac{1}{p}.
\end{equation}
 
\end{theorem}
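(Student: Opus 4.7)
The strategy is the classical partial regularity programme for quasiconvex functionals—Caccioppoli inequality, harmonic-type approximation, and excess decay—adapted to the borderline case $f\in L^{n,\infty}$. The first observation is that the assumed smallness of $\|f\|_{L^{n,\infty}}$ transfers to smallness of the natural Morrey-type quantity appearing in the energy estimates: for any $m<n$ one has, by the Marcinkiewicz--Hölder embedding,
\begin{equation*}
\rho^{m}\dashint_{B_{\rho}}\snr{f}^{m}\dx \;\leqslant\; c(n,m)\,\|f\|_{L^{n,\infty}(\Omega)}^{m},
\end{equation*}
uniformly in $\rho$, so that \eqref{est_Lninfty} reduces the problem to the smallness of a scale-invariant Morrey norm, and the quantitative factor in \eqref{est_Lninfty} is precisely what converts $\eps_*$ from the $\eps$-regularity criterion into the stated $L^{n,\infty}$-condition.

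With this reduction in hand, I would proceed in three steps. First, I establish a Caccioppoli inequality of the second kind for $(u-\ell)$ around affine maps $\ell$, in which the $f$-term enters as an additive Morrey-type remainder controlled by $\rho^{m}\dashint_{B_\rho}\snr{f}^m\dx$. Second, I linearize: on balls where the excess $\mathcal{E}(u;B_\rho)$ defined in \eqref{eccess} is small, $u$ is close to a minimizer of the frozen quadratic functional built from $D^{2}F((Du)_{B_\rho})$; an $\mathcal{A}$-harmonic approximation argument in the spirit of Duzaar--Mingione then yields an affine comparison map with sharp decay. Combining these two bounds produces an excess decay of the form
\begin{equation*}
\mathcal{E}(u;B_{\tau\rho}) \;\leqslant\; C\tau^{\beta}\,\mathcal{E}(u;B_\rho) \;+\; C\,\bigl(\rho^{m}\dashint_{B_\rho}\snr{f}^m\dx\bigr)^{1/p},
\end{equation*}
for any $\tau\in (0,1/4)$, with $\beta>0$ determined by the reference linear system.

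The third step is the iteration. Because the $f$-term is only scale-invariant rather than scale-decaying, one cannot hope for Hölder continuity of $Du$ but only for a $BMO$ bound: choosing first $\tau$ small so that $C\tau^{\beta}\leqslant 1/2$, and then $\eps_*$ sufficiently small in terms of $\tau$ and the data, one shows inductively that the excess at the dyadic scales $\tau^{k}\rho$ remains under a fixed threshold whenever the initial excess does, and summing $|(Du)_{B_{\tau^{k+1}\rho}}-(Du)_{B_{\tau^{k}\rho}}|$ across $k$ gives a geometric-plus-constant bound that translates exactly into the $BMO$ control \eqref{t1 1}. The regular set $\Omega_u$ is then given by \eqref{t1 2}: its openness follows from continuity of $\rho\mapsto\mathcal{E}(u;B_\rho(x_0))$ in both arguments, and its complement has zero Lebesgue measure because at every Lebesgue point of $Du$ the excess can be made arbitrarily small at sufficiently fine scales.

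The principal obstacle is exactly the critical nature of $L^{n,\infty}$: no integrability gain is available to turn the $f$-term into a decaying right-hand side, so the iteration must be balanced so that the additive contribution at each step is absorbed into the $BMO$ seminorm rather than into a fraction of $\mathcal{E}$; this is where the precise smallness in \eqref{est_Lninfty} is indispensable. A secondary technical point is the genuinely mixed $\snr{z_0}^{p-2}\snr{\cdot}^2+\snr{\cdot}^p$ structure of the excess in \eqref{eccess} for $p>2$, which must be handled consistently throughout the Caccioppoli and $\mathcal{A}$-harmonic steps, and which fixes the particular exponent $m$ used in the Morrey quantity.
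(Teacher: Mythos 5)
Your overall scaffolding---reduce $L^{n,\infty}$ smallness to a scale-invariant Morrey bound via the distribution function, Caccioppoli inequality around affine maps, harmonic approximation, excess decay, dyadic iteration to get $BMO$, then Lebesgue points and absolute continuity for the regular set---matches the skeleton of the paper's argument. The quantitative $L^{n,\infty}\to$ Morrey step is indeed exactly the content of the paper's estimate \eqref{jensen}, and your iteration heuristic (the forcing term is scale-invariant, hence one can only conclude $BMO$) is the right intuition.

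There is, however, a genuine gap: your excess decay rests entirely on $\mathcal{A}$-harmonic approximation after freezing the coefficients to $\partial^2 F\bigl((Du)_{B_\rho}\bigr)$. Under assumptions \eqref{assf}$_3$ and \eqref{sqc.1}, that frozen bilinear form has ellipticity constants proportional to $\snr{(Du)_{B_\rho}}^{p-2}$, which degenerate as $(Du)_{B_\rho}\to 0$ when $p>2$. The approximation lemma \cite[Lemma~2.4]{kumi} you implicitly invoke requires $z_0=(Du)_{B_\rho}\neq 0$ \emph{and} a nondegeneracy condition of the type $\mathcal{E}(u;B_\rho)\leq\eps_0\snr{(Du)_{B_\rho}}$ (this is \eqref{l2 2} in the paper). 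On balls where the average gradient is comparable to or smaller than the excess, the linearization simply produces no information, and the iteration you describe stalls. The paper handles precisely this through the separate ``degenerate scenario'' (Proposition~\ref{p1}): it invokes the $p$-Laplacian degeneracy hypothesis \eqref{p0}--\eqref{p0.1}, shows in Lemma~\ref{l7} that the minimizer is almost $p$-harmonic at small gradient scales, and then uses $p$-harmonic approximation \cite{dsv} rather than $\mathcal{A}$-harmonic approximation, with the reverse H\"older inequality \eqref{revh} supplying the needed self-improvement. The final iteration (Proposition~\ref{loc_BMO_prop}) must branch at every scale between the two regimes. Your proposal neither mentions assumption \eqref{p0} nor provides any device for the degenerate regime, so as written it does not yield the excess decay on every ball in $\Omega_u$. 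A secondary, milder issue is the exponent on the $f$-term in your announced decay: it should be $1/(m(p-1))$, not $1/p$, to match the natural scaling of nonlinear potential estimates for the $p$-Laplacian; this changes the quantitative form of $\eps_*$ in \eqref{est_Lninfty} but not the structure of the iteration.
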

We immediately refer to Section \ref{sa} below for a description of the structural assumptions in force in Theorem \ref{t1}. Let us put our result in the context of the available literature. The notion of quasiconvexity was introduced by Morrey~\cite{Mor57} in relation to the delicate issue of semicontinuity of multiple integrals in Sobolev spaces: an integrand $F(\cdot)$  is a {\it quasiconvex} whenever 
\begin{flalign}\label{qc}
	\dashint_{B_{1}(0)}F(z+D\varphi) \dx \geqslant F(z)\quad \mbox{holds for all} \ \ z\in \mathds{R}^{N\times n}, \ \ \varphi\in C^{\infty}_{\rm c}(B_{1}(0),\mathds{R}^{N}).
\end{flalign}
 Under power growth conditions, \eqref{qc} is proven to be necessary and sufficient for the sequential weak lower semicontinuity on~$W^{1,p}(\Omega;\erN)$; see~\cite{AF84,BM84, ma3, ma2, Mor57}. It is worth stressing that quasiconvexity is a strict generalization of convexity: the two concepts coincide in the scalar setting ($N=1$), or for $1$-d problems ($n=1$), but sharply differ in the multidimensional case: every convex function is quasiconvex thanks to Jensen's inequality, while the determinant is quasiconvex (actually polyconvex), but not convex, cf. \cite[Section 5.1]{giu}. Another distinctive trait is the nonlocal nature of quasiconvexity: Morrey \cite{Mor57} conjectured that there is no condition involving only $F(\cdot)$ and a finite number of its derivatives that is both necessary and sufficient for quasiconvexity, fact later on confirmed by Kristensen \cite{k}. A peculiarity of quasiconvex functionals is that minima and critical points (i.e., solutions to the associated Euler-Lagrange system) might have very different behavior under the (partial) regularity viewpoint. In fact, a classical result of Evans \cite{Eva86} states that the gradient of minima is locally H\"older continuous outside a negligible, \ap singular" set, while a celebrated counterexample due to M\"uller \& \v{S}ver\'ak \cite{musv} shows that the gradient of critical points may be everywhere discontinuous. After Evans seminal contribution \cite{Eva86}, the partial regularity theory was extended by Acerbi \& Fusco \cite{AF87} to possibly degenerate quasiconvex functionals with superquadratic growth, and by Carozza \& Fusco \& Mingione \cite{CFM98} to subquadratic, nonsingular variational integrals. A unified approach that allows simultaneously handling degenerate/nondegenerate, and singular/nonsingular problems, based on the combination of $\mathcal{A}$-harmonic approximation \cite{dust}, and $p$-harmonic approximation \cite{dumi1}, was eventually proposed by Duzaar \& Mingione \cite{dumi}. Moreover, Kristensen \& Mingione \cite{KM07} proved that the Hausdorff dimension of the singular set of Lipschitz continuous minimizers of quasiconvex multiple integrals is strictly less than the ambient space dimension $n$, see also \cite{bgik} for further developments in this direction. We refer to \cite{AM01,dlsv,gme,gm1,gk,gkpq,ma5,Sch08,schm} for an (incomplete) account of classical, and more recent advances in the field. In all the aforementioned papers are considered homogeneous functionals, i.e. $f\equiv 0$ in \eqref{main_fnc}. The first sharp $\varepsilon$-regularity criteria for nonhomogeneous quasiconvex variational integrals guaranteeing almost everywhere gradient continuity under optimal assumptions on $f$ were obtained by De Filippis \cite{dqc}, and De Filippis \& Stroffolini \cite{ds}, by connecting the classical partial regularity theory for quasiconvex functionals with nonlinear potential theory for degenerate/singular elliptic equations, first applied in the context of partial regularity for strongly elliptic systems by Kuusi \& Mingione \cite{kumi}. Potential theory for nonlinear PDE originates from the classical problem of determining the best condition on $f$ implying gradient continuity in the Poisson equation $-\Delta u=f$, that turns out to be formulated in terms of the uniform decay to zero of the Riesz potential, in turn implied by the membership of $f$ to the Lorentz space $L(n,1)$, \cite{CiGA,kumig}. In this respect, a breakthrough result due to Kuusi \& Mingione \cite{kumil,kumi0} states that the same is true for the nonhomogeous, degenerate $p$-Laplace equation - in other words, the regularity theory for the nonhomogeneous $p$-Laplace PDE coincides with that of the Poisson equation up to the $C^{1}$-level. This important result also holds in the case of singular equations \cite{dz,tn}, for general, uniformly elliptic equations \cite{ba2}, up to the boundary \cite{cm,cm1}, and at the level of partial regularity for $p$-Laplacian type systems without Uhlenbeck structure, \cite{by,kumi}. We conclude by highlighting that our Theorem \ref{t1} fits this line of research as, it determines for the first time in the literature optimal conditions on the inhomogeneity $f$ assuring partial $\mbox{BMO}$-regularity for minima of quasiconvex functionals expressed in terms of the limiting function space $L(n,\infty)$. 
 \subsubsection*{Outline of the paper} In Section~\ref{preliminaries} we recall some well-known results from the study of nonlinear problems also establishing some Caccioppoli and Gehring type lemmas. In Section~\ref{excess_decay} we prove the excess decay estimates;  considering separately the nondegenerate and the degenerate case. Section~\ref{proof_of_the_main} is devoted to the proof of Theorem~\ref{t1}.

 \subsection{Structural assumptions}\label{sa}
 In \eqref{main_fnc}, the integrand $F\colon \mathds{R}^{N\times n}\to \mathds{R}$ satisfies
 \begin{flalign}\label{assf}
	\begin{cases}
		\ F\in C^{2}_{\loc}(\mathds{R}^{N\times n})\\*[0.5ex]
		\ \Lambda^{-1}\snr{z}^{p}\leqslant F(z)\leqslant\Lambda \snr{z}^{p}\\*[0.5ex]
		\ \snr{\partial^{2}F(z)}\leqslant\Lambda \snr{z}^{p-2}\\*[0.5ex]
		\ \snr{\partial^{2} F(z_{1})-\partial^{2}F(z_{2})}\le\mu\left(\displaystyle\frac{\snr{z_{2}-z_{1}}}{\snr{z_{2}}+\snr{z_{1}}}\right) \left(\snr{z_{1}}^{2}+\snr{z_{2}}^{2}\right)^{\frac{p-2}{2}}
	\end{cases}
\end{flalign}
for all~$z\in \mathds{R}^{N\times n}$,~$\Lambda\geqslant 1$ being a positive absolute constant and~$\mu\colon [0,\infty)\to [0,1]$ being a concave nondecreasing function with $\mu(0)=0$.  In the rest of the paper we will always assume $p\ge 2$. In order to derive meaningful regularity results, we need to update \eqref{qc} to the stronger strict quasiconvexity condition
\begin{flalign}\label{sqc}
	\int_{B}\left[F(z+D\varphi)-F(z)\right]\  \dx\geqslant \lambda\int_{B}(\snr{z}^{2}+\snr{D\varphi}^{2})^{\frac{p-2}{2}}\snr{D\varphi}^{2} \dx, 
\end{flalign}
holding for all~$z\in \mathds{R}^{N\times n}$ and~$\varphi\in W^{1,p}_{0}(B,\mathds{R}^{N})$, with~$\lambda$ being a positive, absolute constant. Furthermore, we allow the integrand~$F(\cdot)$ to be degenerate elliptic in the origin. More specifically, we assume that~$F(\cdot)$ features degeneracy of $p$-Laplacian type at the origin, i.~\!e.
\begin{flalign}\label{p0}
	\left| \ \frac{\partial F(z)-\partial F(0)-\snr{z}^{p-2}z}{\snr{z}^{p-1}} \ \right|\to 0\, \qquad \text{as}~\snr{z}\to 0\,,
\end{flalign}
which means that we can find a function~$\omega\colon (0,\infty)\to (0,\infty)$ such that
\begin{eqnarray}\label{p0.1}
	\snr{z}\leqslant\omega(s) \ \Longrightarrow \ \snr{\partial F(z)-\partial F(0)-\snr{z}^{p-2}z}\leqslant s\snr{z}^{p-1},
\end{eqnarray}
for every~$z\in \mathds{R}^{N\times n}$ and all~$s\in (0,\infty)$. Moreover, the right-hand side term~$f\colon \Omega\to \mathds{R}^{N}$ in~\eqref{main_fnc} verifies as minimal integrability condition the following
\begin{eqnarray}\label{f}
	f\in L^{m}(\Omega,\mathds{R}^{N})\quad \mbox{with} \ \ 2>m>\begin{cases}
		\ 2n/(n+2)\quad &\mbox{if} \ \ n>2\\*[0.5ex]
		\ 3/2\quad &\mbox{if} \ \ n=2,
	\end{cases}
\end{eqnarray}
which, being~$p\geqslant 2$, in turn implies that
\begin{eqnarray}\label{f.0}
	f\in W^{1,p}(\Omega,\mathds{R}^{N})^{*}\qquad \mbox{and}\qquad m'<2^{*}\leqslant p^{*}.
\end{eqnarray}
Here it is intended that, when~$p\geqslant n$, the Sobolev conjugate exponent~$p^{*}$ can be chosen as large as needed - in particular it will always be larger than~$p$. By~\eqref{qc} and~$\eqref{assf}_{2}$ we have
\begin{eqnarray}\label{df}
	\snr{\partial F(z)}\leqslant c\snr{z}^{p-1},
\end{eqnarray}
with~$c\equiv c(n,N, \Lambda,p)$; see for example~\cite[proof of Theorem 2.1]{ma3}. Finally,~\eqref{sqc} yields that for all~$z\in \mathds{R}^{N\times n}$, $\xi\in \mathds{R}^{N}$, $\zeta\in \mathds{R}^{n}$ it is
\begin{eqnarray}\label{sqc.1}
	\partial^{2}F(z)\langle\xi\otimes \zeta,\xi\otimes \zeta\rangle\geqslant 2\lambda\snr{z}^{p-2}\snr{\xi}^{2}\snr{\zeta}^{2},
\end{eqnarray}
see~\cite[Chapter 5]{giu}.

\section{Preliminaries}\label{preliminaries}
In this section we display our notation and collect some basic results that will be helpful later on.
\subsection{Notation}
In this paper, $\Omega\subset \er^n$ is an open, bounded domain with Lipschitz boundary, and $n \geqslant 2$. By $c$ we will always denote a general constant larger than one, possibly depending on the data of the problem. Special occurrences will be denoted by $c_*,  \tilde c$ or likewise. Noteworthy dependencies on parameters will be highlighted by putting them in parentheses. Moreover, to simplify the notation, we shall array the main parameters governing functional \eqref{main_fnc} in the shorthand $\textnormal{\texttt{data}}:=\left(n,N,\lambda,\Lambda,p,\mu(\cdot),\omega(\cdot)\right)$. By $ B_r(x_0):= \{x \in \er^n  : |x-x_0|< r\}$, we denote the open ball with radius $r$, centred at $x_{0}$; when not necessary or clear from the context, we shall omit denoting the center, i.e. $B_{r}(x_{0})\equiv B_{r}$ - this will happen, for instance, when dealing with concentric balls. For $x_{0}\in \Omega$, we abbreviate $d_{x_{0}}:=\min\left\{1,\dist(x_{0},\partial \Omega)\right\}$. Moreover, with $ B \subset \er^{n}$ being a measurable set with bounded positive Lebesgue measure $0<| B|<\infty$, and $a \colon  B \to \er^{k}$, $k\geqslant 1$, being a measurable map, we denote $$
(a)_{ B} \equiv \dashint_{ B}  a(x)\  \dx  :=  \frac{1}{| B|}\int_{B}  a(x)\dx.
$$
We will often employ the almost minimality property of the average, i.e.
\begin{flalign}\label{minav}
	\left( \ \dashint_{B}\snr{a-(a)_{B}}^{t} \dx\right)^{1/t}\leqslant2\left( \ \dashint_{B}\snr{a-z}^{t} \dx\right)^{1/t}
\end{flalign}
for all $z\in \mathds{R}^{N\times n}$ and any $t\geqslant 1$. Finally, if $t>1$ we will indicate its conjugate by $t':=t/(t-1)$ and its Sobolev exponents as $t^{*}:=nt/(n-t)$ if $t<n$ or any number larger than one for $t\geqslant n$ and $t_{*}:=\max\left\{nt/(n+t),1\right\}$. 

\subsection{Tools for nonlinear problems} When dealing with $p$-Laplacian type problems, we shall often use the auxiliary vector field $V_{s}\colon \er^{N\times n} \to  \er^{N\times n}$, defined by
\begin{flalign*}
	V_{s}(z):= (s^{2}+|z|^{2})^{(p-2)/4}z\qquad \text{with}~p\in (1,\infty), \ \ s\geqslant 0, \ \ z\in \mathds{R}^{N\times n},
\end{flalign*}
incorporating the scaling features of the $p$-Laplacian. If $s=0$ we simply write $V_{s}(\cdot)\equiv V(\cdot)$. A couple of useful related inequalities are
\begin{flalign}\label{Vm}
	\begin{cases}
		\ \snr{V_{s}(z_{1})-V_{s}(z_{2})}\approx (s^{2}+\snr{z_{1}}^{2}+\snr{z_{2}}^{2})^{(p-2)/4}\snr{z_{1}-z_{2}}\\*[0.5ex]
		\ \snr{V_{s}(z_{1}+z_{2})}\lesssim \snr{V_{s}(z_{1})}+\snr{V_{s}(z_{2})}\\*[0.5ex]
		\  \snr{V_{s_1}(z)} \approx \snr{V_{s_2}(z)}, \ \mbox{if}\ \frac{1}{2}s_2 \leqslant s_1 \leqslant2 s_2\\*[0.5ex]
		\  \snr{V(z_1)-V(z_2)}^2 \approx \snr{V_{\snr{z_1}}(z_1-z_2)}^2, \ \mbox{if}\ \frac{1}{2}\snr{z_2} \leqslant\snr{z_1} \leqslant2 \snr{z_2}
	\end{cases}
\end{flalign}
and
\begin{eqnarray}\label{equiv.1}
	\snr{V_{s}(z)}^{2}\approx s^{p-2}\snr{z}^{2}+\snr{z}^{p}\qquad \mbox{with} \ \ p\geqslant 2,
\end{eqnarray}
where the constants implicit in \ap $\lesssim$", \ap $\approx$" depend on $n,N,p$. A relevant property which is relevant for the nonlinear setting is recorded in the following lemma.
\begin{lemma}\label{l6}
	Let $t>-1$, $s\in [0,1]$ and $z_{1},z_{2}\in \mathds{R}^{N\times n}$ be such that $s+\snr{z_{1}}+\snr{z_{2}}>0$. Then
	\begin{flalign*}
		\int_{0}^{1}\left[s^2+\snr{z_{1}+y(z_{2}-z_{1})}^{2}\right]^{\frac{t}{2}} \ \dy\approx (s^2+\snr{z_{1}}^{2}+\snr{z_{2}}^{2})^{\frac{t}{2}},
	\end{flalign*}
	with constants implicit in "$\approx$" depending only on $n,N,t$.
\end{lemma}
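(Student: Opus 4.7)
The two bounds are of opposite difficulty depending on the sign of $t$. Setting $\gamma(y):=z_1+y(z_2-z_1)$, observe that $y\mapsto |\gamma(y)|^2$ is a convex quadratic on $[0,1]$, hence its maximum is attained at an endpoint, giving $|\gamma(y)|^2\leqslant \max(|z_1|^2,|z_2|^2)\leqslant |z_1|^2+|z_2|^2$ for every $y\in[0,1]$. Therefore $s^2+|\gamma(y)|^2\leqslant s^2+|z_1|^2+|z_2|^2$ uniformly, which yields the upper bound when $t\geqslant 0$ and the lower bound when $-1<t\leqslant 0$ immediately by integration. So the work lies in proving the reverse inequalities: the lower bound for $t\geqslant 0$ and the upper bound for $-1<t<0$ (where the singularity of the integrand matters and $t>-1$ is essential for integrability).

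I would split according to whether the segment is \emph{short} or \emph{long} compared to the ambient scale $M:=s+|z_1|+|z_2|$. Fix a small absolute constant, say $1/4$, and distinguish the two regimes through the ratio $R/M$, where $R:=|z_2-z_1|$.

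\textbf{Short-segment case: $R\leqslant M/4$.} Here I would prove that $s^2+|\gamma(y)|^2\approx s^2+|z_1|^2+|z_2|^2$ holds \emph{uniformly} in $y\in[0,1]$. Indeed, triangle inequality gives $|\gamma(y)|\leqslant |z_1|+R$ and $|\gamma(y)|\geqslant \max(|z_1|,|z_2|)-R$; combined with $|z_1|+|z_2|\leqslant M$ and $R\leqslant M/4$, one checks that both $|\gamma(y)|$ and $\max(|z_1|,|z_2|)$ are controlled by $M$ from above and that $s+|\gamma(y)|\gtrsim M$ (splitting further whether $s$ dominates $M$ or not). Squaring and integrating in $y$ delivers both remaining inequalities with constants depending only on $t$.

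\textbf{Long-segment case: $R\geqslant M/4$.} Here I would compute the integral by parametrizing along the segment. Writing $z_m:=(z_1+z_2)/2$, $e:=(z_2-z_1)/R$, and decomposing $z_m=\alpha e+w$ with $w\perp e$, the change of variables $u=\alpha+(y-1/2)R$ gives
\begin{equation*}
\int_0^1 \bigl[s^2+|\gamma(y)|^2\bigr]^{t/2}\,\mathrm{d}y=\frac{1}{R}\int_{\alpha-R/2}^{\alpha+R/2}(\beta^2+u^2)^{t/2}\,\mathrm{d}u,\qquad \beta^2:=s^2+|w|^2.
\end{equation*}
This one-dimensional integral is then estimated by elementary means: the antiderivative of $(\beta^2+u^2)^{t/2}$ on symmetric/asymmetric intervals around the origin behaves like $R(\beta^2+R^2+\alpha^2)^{t/2}$ up to constants depending on $t$, with the bound $t>-1$ guaranteeing convergence when $\beta\to 0$. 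Since $|w|^2+\alpha^2=|z_m|^2$ and $|z_m|^2+R^2/4=(|z_1|^2+|z_2|^2)/2$, and since in this regime $R^2\gtrsim M^2\gtrsim s^2+|z_1|^2+|z_2|^2$, both inequalities follow.

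The delicate step is the upper bound for $-1<t<0$ in the long-segment case, since this is where the curve $y\mapsto \gamma(y)$ can come close to the origin and produce a singular integrand; the explicit parametrization above is precisely what lets the condition $t>-1$ enter to keep the one-dimensional integral finite and of the correct order.
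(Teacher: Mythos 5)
The paper does not actually prove Lemma \ref{l6}: it is stated without argument, being a classical fact (it goes back to Acerbi--Fusco and Hamburger; see also \cite[Chapter 8]{giu}), so there is no in-paper proof to compare yours against. Your argument is correct and complete, and it follows the standard route for this lemma: the trivial direction via convexity of $y\mapsto|z_1+y(z_2-z_1)|^2$, a uniform two-sided comparison $s^2+|\gamma(y)|^2\approx s^2+|z_1|^2+|z_2|^2$ when $|z_2-z_1|$ is small relative to $s+|z_1|+|z_2|$ (your lower bound $s+|\gamma(y)|\geqslant M/4$ checks out), and an explicit one-dimensional computation along the segment otherwise. The key identifications are sound: by the parallelogram law $|z_m|^2+R^2/4=(|z_1|^2+|z_2|^2)/2$, so $\beta^2+\alpha^2+R^2=s^2+|z_m|^2+R^2\approx s^2+|z_1|^2+|z_2|^2$ with absolute constants, and the one-dimensional estimate $\frac1R\int_{\alpha-R/2}^{\alpha+R/2}(\beta^2+u^2)^{t/2}\,{\rm d}u\approx(\beta^2+\alpha^2+R^2)^{t/2}$ is easily verified by separating $|\alpha|\geqslant R$ from $|\alpha|\leqslant R$, with $t>-1$ entering exactly where you say it does, in the convergence of $\int_0^{R}u^{t}\,{\rm d}u$. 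Your constants in fact depend only on $t$, which is even slightly sharper than the stated dependence on $n,N,t$.
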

The following iteration lemma will be helpful throughout the rest of the paper; for a proof we refer the reader to \cite[Lemma 6.1]{giu}.
\begin{lemma}\label{l5}
	Let $h\colon [\rr_{0},\rr_{1}]\to \mathds{R}$ be a non-negative and bounded function, and let $\theta \in (0,1)$, $A,B,\gamma_{1},\gamma_{2}\geqslant 0$ be numbers. Assume that $h(t)\leqslant\theta h(s)+A(s-t)^{-\gamma_{1}}+B(s-t)^{-\gamma_{2}}$ holds for all $\rr_{0}\leqslant t<s\leqslant\rr_{1}$. Then the following inequality holds $h(\rr_{0})\leqslant c(\theta,\gamma_{1},\gamma_{2})[A(\rr_{1}-\rr_{0})^{-\gamma_{1}}+B(\rr_{1}-\rr_{0})^{-\gamma_{2}}].$
\end{lemma}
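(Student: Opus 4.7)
The plan is to use a by-now classical hole-filling iteration along a geometric sequence of radii. Since the hypothesis allows one to replace $h(t)$ by a contracted copy $\theta h(s)$ modulo negative powers of $s-t$, by summing a convergent geometric series one can eliminate the $h$-term entirely, provided the spacing between consecutive radii shrinks like $\tau^{i}$ for a carefully chosen $\tau\in(0,1)$.

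Concretely, I would fix $\tau \in (0,1)$, to be specified below, and define
\[
t_{0} := \rr_{0}, \qquad t_{i+1} := t_{i}+(1-\tau)\tau^{i}(\rr_{1}-\rr_{0}),
\]
so that $t_{i}= \rr_{0}+(1-\tau^{i})(\rr_{1}-\rr_{0}) \nearrow \rr_{1}$, with spacing $t_{i+1}-t_{i}=(1-\tau)\tau^{i}(\rr_{1}-\rr_{0})$. Applying the assumption with $t=t_{i}$, $s=t_{i+1}$ and iterating $k$ times yields
\[
h(\rr_{0}) \leqslant \theta^{k} h(t_{k}) + \sum_{i=0}^{k-1}\theta^{i}\!\left[\frac{A(1-\tau)^{-\gamma_{1}}(\rr_{1}-\rr_{0})^{-\gamma_{1}}}{\tau^{i\gamma_{1}}}+\frac{B(1-\tau)^{-\gamma_{2}}(\rr_{1}-\rr_{0})^{-\gamma_{2}}}{\tau^{i\gamma_{2}}}\right].
\]

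The only parameter to tune is $\tau$: I would pick it close enough to $1$ so that $\theta\tau^{-\gamma_{j}}<1$ for $j=1,2$, which is possible because $\theta<1$ and $\gamma_{j}\geqslant 0$. With such a choice, each geometric sum $\sum_{i\geqslant 0}(\theta\tau^{-\gamma_{j}})^{i}$ converges to a finite constant depending only on $\theta,\gamma_{1},\gamma_{2}$, while the boundedness of $h$ forces $\theta^{k} h(t_{k})\to 0$ as $k\to\infty$. Passing to the limit $k\to\infty$ therefore produces the claimed inequality, with constant
\[
c(\theta,\gamma_{1},\gamma_{2}) \;=\; \max_{j=1,2}\frac{(1-\tau)^{-\gamma_{j}}}{1-\theta\tau^{-\gamma_{j}}}.
\]

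There is no serious obstacle, only a minor balancing issue in the selection of $\tau$: taking $\tau$ too close to $1$ blows up the prefactor $(1-\tau)^{-\max(\gamma_{1},\gamma_{2})}$, whereas taking $\tau$ too small destroys the geometric decay $\theta\tau^{-\gamma_{j}}<1$. Any admissible choice such as $\tau:=\max\{1/2,\,\theta^{1/(2\max(\gamma_{1},\gamma_{2))}}\}$ (with the convention $\tau=1/2$ when $\gamma_{1}=\gamma_{2}=0$) resolves the trade-off and produces a constant $c$ depending only on $\theta,\gamma_{1},\gamma_{2}$, as claimed.
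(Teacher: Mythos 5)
Your argument is correct and coincides with the classical iteration proof of \cite[Lemma 6.1]{giu}, which is precisely the reference the paper gives in place of a proof: geometric radii $t_i=\rr_0+(1-\tau^i)(\rr_1-\rr_0)$, iteration of the hypothesis, and a choice of $\tau\in(0,1)$ with $\theta\tau^{-\gamma_j}<1$ so that the series converges and $\theta^k h(t_k)\to 0$ by boundedness of $h$. Nothing is missing.
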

We will often consider the \ap quadratic'' version of the excess functional defined in~\eqref{eccess}, i.~\!e. 
\begin{equation}\label{eccquad}
\widetilde{\mathcal{E}}(w,z_0;B_\rr(x_0)) := \left( \ \dashint_{B_\rr(x_0)} \snr{V(\dd w)-z_0}^2 \dx \right)^\frac{1}{2}.
\end{equation}
In the particular case $z_0 = (\dd w)_{B_\rr(x_0)}$ ($z_0 =(V(\dd w))_{B_\rr(x_0)}$, resp.) we shall simply write $\mathcal{E}(w,(\dd w)_{B_\rr(x_0)}; B_\rr (x_0)) \equiv \mathcal{E}(w; B_\rr(x_0))$ ( $\widetilde{\mathcal{E}}(w,(V(\dd w))_{B_\rr(x_0)};B_\rr(x_0))\equiv \widetilde{\mathcal{E}}(w;B_\rr(x_0))$, resp.). A simple computation shows that
\begin{equation}\label{quad exc 1}
\mathcal{E}(w;B_\rr(x_0))^{p/2} \approx \widetilde{\mathcal{E}}(w;B_\rr(x_0)).
\end{equation}

Moreover, from \eqref{minav} and from \cite[Formula~(2.6)]{gm86} we have that
\begin{equation}\label{quad exc 2}
\widetilde{\mathcal{E}}(w;B_\rr(x_0)) \approx \widetilde{\mathcal{E}}(w, V((\dd w)_{B_\rr(x_0)});B_\rr(x_0)).
\end{equation}

\subsection{Basic regularity results} In this section we collect some basic estimates for local minimizers of nonhomogeneous quasiconvex functionals. We start with a variation of the classical Caccioppoli inequality accounting for the presence of a nontrivial right-hand side term, coupled with an higher integrability result of Gehring-type.
\begin{lemma}
	Under assumptions \eqref{assf}$_{1,2,3}$, \eqref{sqc} and \eqref{f}, let $u\in W^{1,p}(\Omega,\mathds{R}^{N})$ be a local minimizer of functional \eqref{main_fnc}.
	\begin{itemize}
		\item For every ball $B_{\rr}(x_{0})\Subset \Omega$ and any $u_{0}\in \mathds{R}^{N}$, $z_{0}\in \mathds{R}^{N\times n}\setminus \{0\}$ it holds that
		\begin{eqnarray}\label{cccp}
			\mathcal{E}(u,z_0;B_{\rr/2}(x_0))^p 
			&\leqslant& c\, \dashint_{B_{\rr}(x_{0})}\snr{z_{0}}^{p-2}\modulo{\frac{u-\ell}{\rr}}^{2}+\modulo{\frac{u-\ell}{\rr}}^{p} \dx \\*[0.5ex] &&+\,\frac{c}{\snr{z_{0}}^{p-2}}\left(\rr^{m}\dashint_{B_{\rr}(x_{0})}\snr{f}^{m} \  \dx\right)^{\frac{2}{m}},\notag
		\end{eqnarray}
		where~$\mathcal{E}(\cdot)$ is defined in \eqref{eccess}, $\ell(x):=u_{0}+\langle z_0,x-x_{0}\rangle$ and $c\equiv c(n,N,\lambda,\Lambda,p)$. 
		\item There exists an higher integrability exponent $p_{2}\equiv p_{2}(n,N,\lambda,\Lambda,p)>p$ such that $Du\in L^{p_{2}}_{\loc}(\Omega,\mathds{R}^{N\times n})$ and the reverse H\"older inequality
		\begin{eqnarray}\label{revh}
			&& \left(\ \dashint_{B_{\rr/2}(x_{0})}\snr{Du-(Du)_{B_{\rr}(x_{0})}}^{p_{2}} \dx\right)^{\frac{1}{p_{2}}}\notag\\
			&&\qquad \leqslant c\left( \ \dashint_{B_{\rr}(x_{0})}\snr{Du}^{p} \dx\right)^{\frac{1}{p}}+c\left(\rr^{m}\dashint_{B_{\rr}(x_{0})}\snr{f}^{m} \dx\right)^{\frac{1}{m(p-1)}}\,,
		\end{eqnarray}
		is verified for all balls $B_{\rr}(x_{0})\Subset \Omega$ with $c\equiv c(n,N,\lambda,\Lambda,p)$.
	\end{itemize}
\end{lemma}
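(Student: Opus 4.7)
My plan is to prove \eqref{cccp} via the classical test-function scheme for strictly quasiconvex integrals, adapted to the presence of the forcing $f$ and the nonzero reference matrix $z_0$, and then to deduce the reverse H\"older inequality \eqref{revh} by running the same scheme with $z_0=0$ and invoking Gehring's lemma. For the Caccioppoli, I would fix concentric radii $\rho/2\le\tau<\sigma\le\rho$ and a cutoff $\eta\in C^\infty_c(B_\sigma)$ satisfying $\eta\equiv1$ on $B_\tau$ and $|D\eta|\le 2/(\sigma-\tau)$, set $v:=u-\ell$ (so that $Dv=Du-z_0$) and split $v=\varphi_1+\varphi_2$ with $\varphi_1:=\eta v\in W^{1,p}_0(B_\rho;\erN)$. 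Strict quasiconvexity \eqref{sqc} applied to $\varphi_1$ with reference matrix $z_0$ yields
\[\lambda\int_{B_\rho}(|z_0|^2+|D\varphi_1|^2)^{(p-2)/2}|D\varphi_1|^2\,dx \le \int_{B_\rho}[F(z_0+D\varphi_1)-F(z_0)]\,dx,\]
whose left-hand side controls $\int(|z_0|^{p-2}|D\varphi_1|^2+|D\varphi_1|^p)\,dx$ since $p\ge2$.

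Next I would rewrite the right-hand side as $[F(z_0+D\varphi_1)-F(Du)]+[F(Du)-F(z_0)]$ and invoke local minimality of $u$ against the admissible competitor $u-\varphi_1$ (for which $Du-D\varphi_1=z_0+D\varphi_2$), giving
\[\int_{B_\rho}[F(Du)-F(z_0)]\,dx \le \int_{B_\rho}[F(z_0+D\varphi_2)-F(z_0)]\,dx + \int_{B_\rho}f\cdot\varphi_1\,dx.\]
The combined right-hand side of the quasiconvexity inequality then reduces to $\int_{B_\rho}G\,dx+\int_{B_\rho}f\cdot\varphi_1\,dx$, where the ``second difference''
\[G:=F(z_0+D\varphi_1)-F(z_0+D\varphi_1+D\varphi_2)+F(z_0+D\varphi_2)-F(z_0)\]
vanishes both on $B_\tau$ (where $D\varphi_2\equiv0$) and on $B_\rho\setminus B_\sigma$ (where $D\varphi_1\equiv0$). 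A double Taylor expansion harnessing $|\partial^2 F(z)|\le\Lambda|z|^{p-2}$ from \eqref{assf}$_3$ produces the pointwise bound $|G|\lesssim(|z_0|+|D\varphi_1|+|D\varphi_2|)^{p-2}|D\varphi_1||D\varphi_2|$ on the annulus $B_\sigma\setminus B_\tau$, which via $|D\varphi_i|\lesssim|Dv|+|v|/(\sigma-\tau)$ and Young's inequality is controlled by $|z_0|^{p-2}|Dv|^2+|Dv|^p+|z_0|^{p-2}|v/(\sigma-\tau)|^2+|v/(\sigma-\tau)|^p$.

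For the forcing term, H\"older with $(m,m')$ and Sobolev--Poincar\'e (using $m'<2^*\le p^*$ from \eqref{f.0}) yield $|\int f\cdot\varphi_1|\lesssim\rho^{n/m'-n/2^*}\|f\|_{L^m(B_\rho)}\|D\varphi_1\|_{L^2(B_\rho)}$, and a Young inequality weighted by $|z_0|^{p-2}$ splits this as $\delta\int_{B_\rho}|z_0|^{p-2}|D\varphi_1|^2\,dx+c(\delta)|z_0|^{-(p-2)}\rho^{n+2}(\dashint_{B_\rho}|f|^m\,dx)^{2/m}$; the first summand absorbs into the LHS, while the second --- after normalization by $|B_\rho|$ --- produces precisely the denominator $|z_0|^{p-2}$ of \eqref{cccp}. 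A standard hole-filling step together with Lemma~\ref{l5} then eliminates the $(\tau,\sigma)$-dependence and concludes \eqref{cccp}. For \eqref{revh}, I would rerun the scheme with $z_0=0$ and $\ell\equiv(u)_{B_\rho}$, now balancing the forcing term via Young against $|D\varphi_1|^p$ rather than $|z_0|^{p-2}|D\varphi_1|^2$ --- which is exactly what produces the exponent $p/(m(p-1))$ on $f$; Sobolev--Poincar\'e with $p_*=np/(n+p)<p$ then converts the resulting $\dashint|u-(u)_{B_\rho}|^p/\rho^p$ into $(\dashint|Du|^{p_*}\,dx)^{p/p_*}$, yielding a reverse H\"older inequality to which Gehring's lemma~\cite{giu} applies and produces the higher-integrability exponent $p_2>p$ together with \eqref{revh}. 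The main technical delicacy will be the double Taylor bound on $G$ combined with the precise Young balancing of the forcing term that produces the $|z_0|^{-(p-2)}$ normalization in \eqref{cccp}.
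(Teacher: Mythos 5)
Your proposal is correct and follows essentially the same route as the paper: the same splitting $\varphi_1=\eta(u-\ell)$, $\varphi_2=(1-\eta)(u-\ell)$, the same use of strict quasiconvexity combined with minimality to reduce to a second-difference term supported on the annulus (the paper's $\mathrm{I}_1+\mathrm{I}_3$, your $G$) plus the forcing term, the same H\"older/Sobolev--Poincar\'e/Young treatment of $\int f\cdot\varphi_1$ with weight $|z_0|^{p-2}$, hole-filling with Lemma~\ref{l5}, and a rerun with $z_0=0$ followed by Gehring's lemma for \eqref{revh}. The one detail you gloss over is that Gehring's lemma needs the forcing contribution to enter \emph{linearly} as an average, not as a power $\bigl(\sigma^{(p^*)'}\dashint|f|^{(p^*)'}\bigr)^{p/((p^*)'(p-1))}$; the paper handles this by rescaling to $B_1$ and absorbing the excess power into a normalization constant $\mathfrak{K}_{\rho}$ depending only on $\|f_\rho\|_{L^{(p^*)'}(B_1)}$, after which the $m$-dependence in \eqref{revh} enters a posteriori via $\mathfrak{s}(p^*)'\leqslant m$.
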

\begin{proof}
	For the ease of exposition, we split the proof in two steps, each of them corresponding to the proof of \eqref{cccp} and \eqref{revh} respectively.
	\vspace{-4mm}
	\subsubsection*{Step 1: proof of \eqref{cccp} and \eqref{cccp 2}} We choose parameters $\rr/2\leqslant\tau_{1}<\tau_{2}\leqslant\rr$, a cut-off function $\eta\in C^{1}_{c}(B_{\tau_{2}}(x_{0}))$ such that $\mathds{1}_{B_{\tau_{1}}(x_{0})}\leqslant\eta\leqslant\mathds{1}_{B_{\tau_{2}}(x_{0})}$ and $\snr{D\eta}\lesssim (\tau_{2}-\tau_{1})^{-1}$. Set $\varphi_{1}:=\eta(u-\ell)$, $\varphi_{2}:=(1-\eta)(u-\ell)$ and use \eqref{sqc} and the equivalence in \eqref{Vm}$_{1}$ to estimate
	\begin{eqnarray}\label{0}
		c \int_{B_{\tau_{2}}(x_{0})} \snr{V_{\snr{z_{0}}}(D\varphi_{1})}^{2} \  \dx & \leqslant & \int_{B_{\tau_{2}}(x_{0})} [F(z_{0}+D\F_{1})-F(z_{0})] \dx \notag\\*[0.5ex]
		& = & \int_{B_{\tau_{2}}(x_{0})}[F(Du-D\F_{2})-F(Du)] \dx \notag\\*[0.5ex]
		&& +\int_{B_{\tau_{2}}(x_{0})}[F(Du)-F(Du-D\F_{1})] \dx\notag\\*[0.5ex]
		&& +\int_{B_{\tau_{2}}(x_{0})} [F(z_{0}+D\F_{2})-F(z_{0})] \dx=: \mbox{I}_1 +\mbox{I}_2+ \mbox{I}_3,
	\end{eqnarray}
	where we have used the simple relation~$D\F_{1} + D\F_{2} = Du -z_{0}$. Terms $\mbox{I}_{1}$ and $\mbox{I}_{3}$ can be controlled as done in \cite[Proposition 2]{dumi}; indeed we have
	\begin{eqnarray}\label{1}
		\mbox{I}_{1}+\mbox{I}_{3}&\leqslant& c\int_{B_{\tau_{2}}(x_{0})\setminus B_{\tau_{1}}(x_{0})}\snr{V_{\snr{z_{0}}}(D\varphi_{2})}^{2} \dx+c\int_{B_{\tau_{2}}(x_{0})\setminus B_{\tau_{1}}(x_{0})}\snr{V_{\snr{z_{0}}}(Du-z_{0})}^{2} \dx\nonumber \\*[0.5ex]
		&\stackrel{\eqref{Vm}_{2}}{\leqslant}&c\int_{B_{\tau_{2}}(x_{0})\setminus B_{\tau_{1}}(x_{0})}\snr{V_{\snr{z_{0}}}(Du-z_{0})}^{2}+\left|\ V_{\snr{z_{0}}}\left(\frac{u-\ell}{\tau_{2}-\tau_{1}}\right)\ \right|^{2} \dx,
	\end{eqnarray}
	for $c\equiv c(n,N,\lambda,\Lambda,p)$. Concerning term $\mbox{I}_{2}$, we exploit \eqref{f}, the fact that $\varphi_{1}\in W^{1,p}_{0}(B_{\tau_{2}}(x_{0}),\mathds{R}^{N})$ and apply Sobolev-Poincar\'e inequality to get
	\begin{eqnarray}\label{2}
		\mbox{I}_{2}&\leqslant&\snr{B_{\tau_{2}}(x_{0})}\left(\tau_{2}^{m}\dashint_{B_{\tau_{2}}(x_{0})}\snr{f}^{m}\  \dx\right)^{1/m}\left(\tau_{2}^{-m'}\dashint_{B_{\tau_{2}}(x_{0})}\snr{\varphi_{1}}^{m'}\  \dx\right)^{\frac{1}{m'}}\nonumber \\*[0.5ex]
		&\leqslant&\snr{B_{\tau_{2}}(x_{0})}\left(\tau_{2}^{m}\dashint_{B_{\tau_{2}}(x_{0})}\snr{f}^{m}\  \dx\right)^{1/m}\left( \ \dashint_{B_{\tau_{2}}(x_{0})}\left| \ \frac{\varphi_{1}}{\tau_{2}} \ \right|^{2^{*}}\  \dx\right)^{\frac{1}{2^{*}}}\nonumber \\*[0.5ex]
		&\leqslant&\snr{B_{\tau_{2}}(x_{0})}\left(\tau_{2}^{m}\dashint_{B_{\tau_{2}}(x_{0})}\snr{f}^{m} \dx\right)^{1/m}\left( \ \dashint_{B_{\tau_{2}}(x_{0})}\snr{D\varphi_{1}}^{2} \dx\right)^{\frac{1}{2}}\nonumber \\*[0.5ex]
		&\leqslant&\varepsilon\int_{B_{\tau_{2}}(x_{0})}\snr{V_{\snr{z_{0}}}(D\varphi_{1})}^{2} \dx+\frac{c\snr{B_{\rr}(x_{0})}}{\varepsilon\snr{z_{0}}^{p-2}}\left(\rr^{m}\dashint_{B_{\rr}(x_{0})}\snr{f}^{m} \dx\right)^{\frac{2}{m}},
	\end{eqnarray}
	where $c\equiv c(n,N,m)$ and we also used that $\rr/2\leqslant\tau_{2}\leqslant\rr$. Merging the content of the two above displays, recalling that $\eta\equiv 1$ on $B_{\tau_{1}}(x_{0})$
	and choosing $\varepsilon>0$ sufficiently small, we obtain
	\begin{eqnarray*}
		\int_{B_{\tau_{1}}(x_{0})}\snr{V_{\snr{z_{0}}}(Du-z_{0})}^{2} \dx&\leqslant&c\int_{B_{\tau_{2}}(x_{0})\setminus B_{\tau_{1}}(x_{0})}\snr{V_{\snr{z_{0}}}(Du-z_{0})}^{2}+\left| \ V_{\snr{z_{0}}}\left(\frac{u-\ell}{\tau_{2}-\tau_{1}}\right)\ \right|^{2} \dx \nonumber \\*[0.5ex]
		&&+\frac{c\snr{B_{\rr}(x_{0})}}{\snr{z_{0}}^{p-2}}\left(\rr^{m}\dashint_{B_{\rr}(x_{0})}\snr{f}^{m} \dx\right)^{\frac{2}{m}},
	\end{eqnarray*}
	with $c\equiv c(n,N,\lambda,\Lambda,p)$. At this stage, the classical hole-filling technique, Lemma \ref{l5} and \eqref{equiv.1} yield \eqref{cccp} and the first bound in the statement is proven.

	\subsubsection*{Step 2: proof of \eqref{revh}} To show the validity of \eqref{revh}, we follow \cite[proof of Proposition 3.2]{kumi} and first observe that if $u$ is a local minimizer of functional $\mathcal{F}(\cdot)$ on $B_{\rr}(x_{0})$, setting $f_{\rr}(x):=\rr f(x_{0}+\rr x)$, the map $u_{\rr}(x):=\rr^{-1}u(x_{0}+\rr x)$ is a local minimizer on $B_{1}(0)$ of an integral with the same integrand appearing in \eqref{main_fnc} satisfying $\eqref{assf}_{1,2,3}$ and $f_{\rr}$ replacing $f$. This means that \eqref{1} still holds for all balls $B_{\sigma/2}(\ti{x})\subseteq B_{\tau_{1}}(\ti{x})\subset B_{\tau_{2}}(\ti{x})\subseteq B_{\sigma}(\ti{x})\Subset B_{1}(0)$, with $\ti{x}\in B_{1}(0)$ being any point - in particular it remains true if $\snr{z_{0}}=0$, while condition $\snr{z_{0}}\not =0$ was needed only in the estimate of term $\mbox{I}_{2}$ in \eqref{2}, that now requires some change. So, in the definition of the affine map $\ell$ we choose $z_{0}=0$, $u_{0}=(u_{\rr})_{B_{\sigma}(\ti{x})}$ and rearrange estimates \eqref{1}-\eqref{2} as:
	\[
		\mbox{I}_{1}+\mbox{I}_{3}\stackrel{\eqref{equiv.1}}{\leqslant} c\int_{B_{\tau_{2}}(\ti{x})\setminus B_{\tau_{1}}(\ti{x})}\snr{Du_{\rr}}^{p} +\left| \ \frac{u_{\rr}-(u_{\rr})_{B_{\sigma}(\ti{x})}}{\tau_{2}-\tau_{1}} \ \right|^{p} \dx\,,
   \]
	and, recalling that $\varphi_{1}\in W^{1,p}_{0}(B_{\tau_{2}}(\ti{x}),\mathds{R}^{N})$, via Sobolev Poincaré, H\"older and Young inequalities and \eqref{f.0}$_{2}$, we estimate
	\begin{eqnarray*}
		\mbox{I}_2 &\leqslant &  \modulo{B_{\tau_2}(\ti{x})} \left(\tau_2^{(p^*)'} \dashint_{B_{\tau_2}(\ti{x})}\modulo{f_{\rr}}^{(p^*)'} \  \dx\right)^{\frac{1}{(p^*)'}} \left(\tau_2^{-p^*} \dashint_{B_{\tau_2}(\ti{x})}\modulo{\F_1}^{p^*}\  \dx \right)^\frac{1}{p^*}\\*[0.5ex]
		&\leqslant & c\modulo{B_{\tau_2}(\ti{x})} \left(\tau_2^{(p^*)'} \dashint_{B_{\tau_2}(\ti{x})}\modulo{f_{\rr}}^{(p^*)'} \dx\right)^{\frac{1}{(p^*)'}} \left( \ \dashint_{B_{\tau_2}(\ti{x})}\modulo{\dd \F_1}^{p} \dx \right)^\frac{1}{p}\\*[0.5ex]
		&\leqslant & \frac{c\snr{B_{\sigma}(\ti{x})}}{\varepsilon^{1/(p-1)}}\left(\sigma^{(p^*)'} \dashint_{B_\sigma(\ti{x})}\modulo{f_{\rr}}^{(p^*)'} \dx\right)^{\frac{p}{(p^*)'(p-1)}}+ \varepsilon  \int_{B_{\tau_2}(\ti{x})}\modulo{\dd \F_1}^{p} \dx,
	\end{eqnarray*}
	with $c\equiv c(n,N,p)$. Plugging the content of the two previous displays in \eqref{0}, reabsorbing terms and applying Lemma \ref{l5}, we obtain
	\begin{eqnarray}\label{3}
		&&\dashint_{B_{\sigma/2}(\ti{x})}\snr{Du_{\rr}}^{p} \ \dx\notag \\*[0.5ex]
		&&\quad\leqslant c \ \dashint_{B_{\sigma}(\ti{x})}\left| \ \frac{u_{\rr}-(u_{\rr})_{B_{\sigma}(\ti{x})}}{\sigma} \ \right|^{p} \ \dx+c\left(\sigma^{(p^*)'} \dashint_{B_\sigma(\ti{x})}\modulo{f_{\rr}}^{(p^*)'} \dx\right)^{\frac{p}{(p^*)'(p-1)}},
	\end{eqnarray}
	for $c\equiv c(n,N,\Lambda,\lambda,p)$. Notice that 
	\begin{flalign}\label{5}
		n\left(\frac{p}{(p^*)'(p-1)}-1\right)\leqslant\frac{p}{p-1},
	\end{flalign}
	with equality holding when $p<n$, while for $p\geqslant n$ any value of $p^{*}>1$ will do. 
	We then manipulate the second term on the right-hand side of \eqref{3} as
	\begin{eqnarray*}
	&&	\left(\sigma^{(p^*)'} \dashint_{B_\sigma(\ti{x})}\modulo{f_{\rr}}^{(p^*)'} \dx\right)^{\frac{p}{(p^*)'(p-1)}}\notag\\*[0.5ex]
	&&\quad\leqslant\sigma^{\frac{p}{p-1}-n\left(\frac{p}{(p^{*})'(p-1)}-1\right)}\left( \ \dashint_{B_{1}(0)}\snr{f_{\rr}}^{(p^{*})'} \ \dx\right)^{\frac{p}{(p^{*})'(p-1)}-1}		\dashint_{B_{\sigma}(\ti{x})}\snr{f_{\rr}}^{(p^{*})'} \dx\nonumber \\*[0.5ex]
		&&\quad\stackrel{\eqref{5}}{\leqslant}\left( \ \dashint_{B_{1}(0)}\snr{f_{\rr}}^{(p^{*})'} \ \dx\right)^{\frac{p}{(p^{*})'(p-1)}-1}\dashint_{B_{\sigma}(\ti{x})}\snr{f_{\rr}}^{(p^{*})'} \ \dx\nonumber \\*[0.5ex]
		&&\quad=:\dashint_{B_{\sigma}(\ti{x})}\snr{\mathfrak{K}_{\rr}f_{\rr}}^{(p^{*})'} \ \dx,
	\end{eqnarray*}
	where we set $\mathfrak{K}_{\rr}^{(p^{*})'}:=\snr{B_{1}(0)}^{1-\frac{p}{(p^{*})'(p-1)}}\nr{f_{\rr}}^{\frac{p}{p-1}-(p^{*})'}_{L^{(p^{*})'}(B_{1}(0))}$. Plugging the content of the previous display in \eqref{3} and applying Sobolev-Poincar\'e inequality we get
	\begin{eqnarray*}
		\dashint_{B_{\sigma/2}(\ti{x})}\snr{Du_{\rr}}^{p} \dx\leqslant c\left( \ \dashint_{B_{\sigma}(\ti{x})}\snr{Du_{\rr}}^{p_{*}} \ \dx\right)^{\frac{p}{p_{*}}}+c \ \dashint_{B_{\sigma}(\ti{x})}\snr{\mathfrak{K}_{\rr}f_{\rr}}^{(p^{*})'} \ \dx,
	\end{eqnarray*}
	with $c\equiv c(n,N,\Lambda,\lambda,p)$. Now we can apply a variant of Gehring  lemma \cite[Corollary 6.1]{giu} to determine a higher integrability exponent $\mathfrak{s}\equiv \mathfrak{s}(n,N,\Lambda,\lambda,p)$ such that $1<\mathfrak{s}\leqslant m/(p^{*})'$ and
	\begin{eqnarray}\label{4}
		\left( \ \dashint_{B_{\sigma/2}(\ti{x})}\snr{Du_{\rr}}^{\mathfrak{s}p} \dx\right)^{\frac{1}{\mathfrak{s}p}}&\leqslant& c\left( \ \dashint_{B_{\sigma}(\ti{x})}\snr{Du_{\rr}}^{p}\dx\right)^{\frac{1}{p}}\\*[0.5ex]
		&&+c\mathfrak{K}_{\rr}^{(p^{*})'/p}\left( \ \dashint_{B_{\sigma}(\ti{x})}\snr{f_{\rr}}^{\mathfrak{s}(p^{*})'} \ \dx\right)^{\frac{1}{\mathfrak{s}p}}
	\end{eqnarray}
	for $c\equiv c(n,N,\Lambda,\lambda,p)$. Next, notice that
	\begin{flalign*}
		\mathfrak{K}_{\rr}^{(p^{*})'/p}=\left( \ \dashint_{B_{1}(0)}\snr{f_{\rr}}^{(p^{*})'} \dx\right)^{\frac{1}{(p^{*})'(p-1)}-\frac{1}{p}}\leqslant\left( \ \dashint_{B_{1}(0)}\snr{f_{\rr}}^{\mathfrak{s}(p^{*})'} \dx\right)^{\frac{1}{\mathfrak{s}(p^{*})'(p-1)}-\frac{1}{\mathfrak{s}p}},
	\end{flalign*}
	so plugging this last inequality in \eqref{4} and recalling that $\mathfrak{s}(p^{*})'\leqslant m$, we obtain
	\begin{flalign*}
		\left( \ \dashint_{B_{\sigma/2}(\ti{x})}\snr{Du_{\rr}}^{\mathfrak{s}p} \dx\right)^{\frac{1}{\mathfrak{s}p}}\leqslant c\left( \ \dashint_{B_{\sigma}(\ti{x})}\snr{Du_{\rr}}^{p}\dx\right)^{\frac{1}{p}}+c\left( \ \dashint_{B_{\sigma}(\ti{x})}\snr{f_{\rr}}^{m} \ \dx\right)^{\frac{1}{m(p-1)}}.
	\end{flalign*}
	Setting $p_{2}:=\mathfrak{s}p>p$ above and recalling that $\ti{x}\in B_{1}(0)$ is arbitrary, we can fix $\ti{x}=0$, scale back to $B_{\rr}(x_{0})$ and apply \eqref{minav} to get \eqref{revh} and the proof is complete.
\end{proof}

\section{Excess decay estimate}\label{excess_decay}
In this section we prove some excess decay estimates considering separately two cases: when a smallness condition on the excess functional of our local minimizer $u$ is satisfied and when such an estimate does not hold true.

\subsection{The nondegenerate scenario}
We start working assuming that a suitable smallness condition on the excess functional $\mathcal{E}(u;B_\rr(x_0))$ is fulfilled. In particular, we prove the following proposition.
\begin{proposition}\label{l2}
	Under assumptions \eqref{assf}$_{1,2,3}$, \eqref{sqc} and \eqref{f}, let $u\in W^{1,p}(\Omega,\mathds{R}^{N})$ be a local minimizer of functional \eqref{main_fnc}. Then, for $\tau_0 \in (0,2^{-10})$, there exists $\eps_0 \equiv \eps_0(\data,\tau_0) \in (0,1)$ and $\eps_1 \equiv \eps_1(\data,\tau_0) \in (0,1)$ such that the following implications hold true.
	\begin{itemize}
		\item If the conditions
		\begin{equation}\label{l2 2}
		\mathcal{E}(u;B_\rr(x_0)) \leqslant\eps_0 \snr{(\dd u )_{B_\rr (x_0)}},
		\end{equation}
		and
		\begin{equation}\label{l2 3}
		\left(\rr^m \dashint_{B_{\rr}(x_{0})}\snr{f}^m \ \dx \right)^\frac{1}{m} \leqslant \eps_1 \snr{(\dd u )_{B_\rr (x_0)}}^\frac{p-2}{2} \mathcal{E}(u; B_\rr(x_0))^\frac{p}{2},
		\end{equation}
		are verified on $B_\rr(x_0)$, then it holds that
		\begin{equation}\label{l2 4}
		\mathcal{E}(u; B_{\tau_0 \rr}(x_0)) \leqslant c_0 \tau_0^{\beta_0} \mathcal{E}(u;B_\rr(x_0)),
		\end{equation}
		for all $\beta_0 \in (0,2/p)$, with $c_0 \equiv c_0(\data)>0$.
		\item If condition \eqref{l2 2} holds true and
		\begin{equation}\label{l3 1}
		\left(\rr^m \dashint_{B_{\rr}(x_{0})}\snr{f}^m \ \dx \right)^\frac{1}{m} >  \eps_1 \snr{(\dd u )_{B_\rr (x_0)}}^\frac{p-2}{2} \mathcal{E}(u; B_\rr(x_0))^\frac{p}{2},
		\end{equation}
		is satisfied on $B_\rr(x_0)$, then
		\begin{equation}\label{l3 2}
		\mathcal{E}(u; B_{\tau_0 \rr}(x_0)) \leqslant c_0 \left(\rr^m \dashint_{B_{\rr}(x_{0})}\snr{f}^m \ \dx \right)^\frac{1}{m(p-1)},
		\end{equation}
		for $c_0 \equiv c_0(\data)>0$.
	\end{itemize}
\end{proposition}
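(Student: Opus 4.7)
The plan is to treat both parts through the linearization-plus-$\mathcal{A}$-harmonic approximation scheme in the spirit of \cite{dust,dumi,dqc,ds,kumi}. Throughout I would set $z_0 := (Du)_{B_\rho(x_0)}$, $u_0 := (u)_{B_\rho(x_0)}$, $\ell(x) := u_0 + \langle z_0, x-x_0\rangle$, and introduce the constant bilinear form $\mathcal{A} := \partial^2 F(z_0)$. By \eqref{sqc.1}, $\mathcal{A}$ is Legendre--Hadamard elliptic with ellipticity constant of order $|z_0|^{p-2}$, and the nondegeneracy \eqref{l2 2} ensures that $|z_0|$ dominates the oscillation of $Du$ on $B_\rho(x_0)$, so that the linearization error governed by $\mu$ in \eqref{assf}$_4$ is qualitatively small.

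The first substantive step is to test the Euler--Lagrange system with $\varphi\in C^\infty_c(B_{\rho/2}(x_0);\mathds{R}^N)$, $\|D\varphi\|_\infty\leqslant 1$, and write the linearization identity
\begin{equation*}
\partial F(Du) - \partial F(z_0) - \mathcal{A}(Du - z_0) = \int_{0}^{1}\bigl[\partial^2 F(z_0 + t(Du - z_0)) - \partial^2 F(z_0)\bigr]\dt \cdot (Du - z_0).
\end{equation*}
Assumption \eqref{assf}$_4$, Caccioppoli \eqref{cccp} and the self-improving integrability \eqref{revh} then combine to bound the resulting residue by $\sigma(\eps_0)|z_0|^{p-2}\widetilde{\mathcal{E}}(u;B_\rho(x_0))$ with $\sigma(\eps_0)\to 0$, while Sobolev--Poincaré controls the $f$-term by $c(\rho^m\dashint\snr{f}^m\dx)^{1/m}$. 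A standard $\mathcal{A}$-harmonic approximation lemma \cite{dust,dumi} would then produce $h$ with $\dv(\mathcal{A}Dh)=0$ on $B_{\rho/2}(x_0)$, with $|Dh|\approx |z_0|$ and boundary values close to those of $u$, satisfying
\begin{equation*}
\dashint_{B_{\rho/2}(x_0)}\snr{V_{|z_0|}(D(u-h))}^2\dx \leqslant \gamma(\eps_0,\eps_1)\widetilde{\mathcal{E}}(u;B_\rho(x_0))^2 + c|z_0|^{2-p}\bigl(\rho^m\dashint\snr{f}^m\dx\bigr)^{2/m},
\end{equation*}
with $\gamma(\eps_0,\eps_1)\to 0$. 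Smoothness of $h$ and Campanato's classical decay, combined with \eqref{Vm}$_{3,4}$ and a volume-change factor $\tau_0^{-n/2}$, then give
\begin{equation*}
\widetilde{\mathcal{E}}(u;B_{\tau_0\rho}(x_0)) \leqslant c\tau_0\widetilde{\mathcal{E}}(u;B_\rho(x_0)) + c\tau_0^{-n/2}\sqrt{\gamma(\eps_0,\eps_1)}\widetilde{\mathcal{E}}(u;B_\rho(x_0)) + c\tau_0^{-n/2}|z_0|^{(2-p)/2}\bigl(\rho^m\dashint\snr{f}^m\dx\bigr)^{1/m}.
\end{equation*}

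For \eqref{l2 4} I would fix $\tau_0$ first, then pick $\eps_0,\eps_1$ so that $\sqrt{\gamma(\eps_0,\eps_1)}$ is small and, through \eqref{l2 3}, the $f$-piece is subordinate to $\eps_1\widetilde{\mathcal{E}}(u;B_\rho)$; taking $(2/p)$-roots via \eqref{quad exc 1} would deliver \eqref{l2 4} for any $\beta_0 \in (0,2/p)$, the sharp exponent coming from the $V$-to-gradient passage for $p\geqslant 2$. For \eqref{l3 2}, condition \eqref{l3 1} flips the balance: the $\widetilde{\mathcal{E}}$-pieces on the right become subordinate to the $f$-term, and an auxiliary Caccioppoli at scale $\tau_0\rho$ absorbs the $|z_0|^{(2-p)/2}$ factor through the nondegeneracy \eqref{l2 2}, yielding \eqref{l3 2} after \eqref{quad exc 1}; the exponent $1/(m(p-1))$ is the natural Riesz-potential scaling for the $p$-Laplacian, cf.\ \cite{kumi,kumig}.

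The hard part will be the quantitative calibration of the $\mathcal{A}$-harmonic approximation: since the linearization error is only qualitatively small through $\mu$, the thresholds $\eps_0,\eps_1$ have to be selected \emph{after} $\tau_0$ is fixed, and the higher integrability \eqref{revh} is crucial in order to upgrade $L^p$-smallness of the residue to the $L^{p_2}$-smallness required by the sharp form of the approximation lemma. The rigid balancing of powers of $|z_0|$ across \eqref{l2 3} versus \eqref{l3 1} is what ultimately forces the two-case dichotomy in the statement.
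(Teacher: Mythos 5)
Your overall scheme for the first implication — normalize, linearize, exhibit approximate $\mathcal{A}$-harmonicity, approximate by a genuinely $\mathcal{A}$-harmonic function, and use its interior regularity to decay the excess — is the same route the paper follows. Two technical points, however, are off. First, the $\mathcal{A}$-harmonic approximation lemma (\cite[Lemma~2.4]{kumi}) does not deliver the estimate you write for $\dashint_{B_{\rho/2}}\snr{V_{\snr{z_0}}(D(u-h))}^2\dx$: its output is smallness of $u_0-h_0$ in $L^2$ and $L^p$ (the function, not the gradient), together with the bound \eqref{l2 6} on $Dh_0$. The passage to a gradient statement then goes through the interior estimate $\rho^\gamma\sup_{B_{\rho/2}}\snr{D^2 h_0}^\gamma\lesssim\dashint_{B_{3\rho/4}}\snr{Dh_0}^\gamma\dx$, affine comparison maps $\ell_{2\tau_0\rho}$, and a Caccioppoli at scale $\tau_0\rho$ — precisely as in \eqref{l2 7}--\eqref{l2 15} of the paper. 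Your claimed intermediate inequality would need extra justification and the paper does not prove it in that form. Second, the paper works with the normalized map $u_0$ of \eqref{u0}, which satisfies $\dashint\snr{Du_0}^2+d^{p-2}\dashint\snr{Du_0}^p\leq 1$, and with $\mathscr{A}=\partial^2F((Du)_\rho)\snr{(Du)_\rho}^{2-p}$; with the unnormalized $\mathcal{A}=\partial^2F(z_0)$ the hypotheses of the approximation lemma are not met without first rescaling, so keeping the $\snr{z_0}^{p-2}$ ellipticity factor explicit creates avoidable bookkeeping.

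The second implication is where your proposal genuinely diverges from (and is weaker than) the paper. You suggest re-running the approximation and invoking an "auxiliary Caccioppoli at $\tau_0\rho$" to absorb the $\snr{z_0}^{(2-p)/2}$ factor. The paper shows this case needs no approximation and no Caccioppoli at all: one simply uses the crude, purely measure-theoretic bound $\mathcal{E}(u;B_{\tau_0\rho})^{p/2}\leq 2^{3p}\tau_0^{-n/2}\mathcal{E}(u;B_\rho)^{p/2}$ from \cite[Lemma~2.4]{dqc}, then substitutes \eqref{l3 1} to replace $\mathcal{E}(u;B_\rho)^{p/2}$ by $\eps_1^{-1}\snr{(Du)_\rho}^{(2-p)/2}(\rho^m\dashint\snr{f}^m)^{1/m}$, then uses \eqref{l2 2} to convert $\snr{(Du)_\rho}^{(2-p)/2}$ into $\eps_0^{(p-2)/2}\mathcal{E}(u;B_{\tau_0\rho})^{(2-p)/2}$, and finally multiplies both sides by $\mathcal{E}(u;B_{\tau_0\rho})^{(p-2)/2}$ to produce $\mathcal{E}(u;B_{\tau_0\rho})^{p-1}\lesssim(\rho^m\dashint\snr{f}^m)^{1/m}$, whence the exponent $1/(m(p-1))$. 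The mechanism that cancels the degeneracy factor is this algebraic combination of \eqref{l2 2} with \eqref{l3 1} — not a Caccioppoli step — and your proposal does not identify it; as written, the absorption argument you describe is not carried out and is not obviously feasible. You should make this case the easy one, not the one that requires the full approximation machinery again.
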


\begin{proof}[Proof of Proposition {\rm\ref{l2}}]
	For the sake of readability, since all balls considered here are concentric to $B_\rr(x_0)$, we will omit denoting the center. Moreover, we will adopt the following notation $(\dd u )_{B_\sss (x_0)} \equiv (\dd u )_\sss$ and, for all $\F \in C^\infty_c(B_\rr;\erN)$, we will denote $\norma{\dd \F}_{L^\infty(B_\rr)} \equiv \norma{\dd \F}_\infty$. We spilt the proof in two steps.
	
	\subsubsection*{Step 1: proof of \eqref{l2 4}} With no loss of generality we can assume that $\mathcal{E}(u;B_\rr)> 0$, which clearly implies, thanks to \eqref{l2 2}, that $\snr{(\dd u )_\rr}>0$.
	
	We begin proving that condition \eqref{l2 2} implies that
	\begin{equation} \label{l2 5}
	\dashint_{B_\rr} \snr{\dd u}^p \dx \leqslant c \snr{(\dd u)_\rr}^p,
	\end{equation}
	for a constant $c \equiv c(p,\eps_0)>0$. Indeed,
	\begin{eqnarray*}
		\dashint_{B_\rr} \snr{\dd u}^p \dx &\leqslant& c \ \dashint_{B_\rr} \snr{\dd u - (\dd u)_\rr}^p \dx + c \snr{(\dd u)_\rr}^p\\*[0.5ex]
		&\stackrel{\eqref{eccess}}{\leqslant}& c \, \mathcal{E}(u;B_\rr)^p + c\snr{(\dd u)_\rr}^p\\*[0.5ex]
		&\stackrel{\eqref{l2 2}}{\leqslant}& c(\eps_0^p+1) \snr{(\dd u)_\rr}^p,
	\end{eqnarray*}
	and \eqref{l2 5} follows.
	
	Consider now 
	\begin{equation}\label{u0}
	B_\rr \ni x \mapsto u_0(x) := \frac{\snr{(\dd u )_\rr}^\frac{p-2}{2}\big(u(x)-(u )_\rr - \langle (\dd u )_\rr,x-x_0\rangle\big)}{\mathcal{E}(u; B_\rr)^{p/2}},
	\end{equation}
	and 
	$$
    d:= \left(\frac{\mathcal{E}(u;B_\rr)}{\snr{(\dd u)_\rr}}\right)^\frac{p}{2}.
	$$
	Let us note that we have
	\begin{eqnarray*}
	&& \dashint_{B_\rr}\snr{\dd u_0}^2 \dx + d^{p-2}\dashint_{B_\rr}\snr{\dd u_0}^2 \dx \\*[0.5ex]
    && \qquad \leqslant   \frac{\snr{(\dd u)_\rr}^{p-2}}{\mathcal{E}(u;B_\rr)^p}\dashint_{B_\rr}\snr{\dd u-(\dd u)_\rr}^2 \dx \\*[0.5ex]
    &&\qquad\qquad +  \left(\frac{\mathcal{E}(u;B_\rr)}{\snr{(\dd u)_\rr}}\right)^\frac{p(p-2)}{2}\frac{\snr{(\dd u)_\rr}^\frac{p(p-2)}{2}}{\mathcal{E}(u;B_\rr)^\frac{p^2}{2}}\dashint_{B_\rr}\snr{\dd u-(\dd u)_\rr}^p  \dx\\*[0.5ex]
     && \qquad \leqslant   \frac{1}{\mathcal{E}(u;B_\rr)^p}\dashint_{B_\rr}\snr{(\dd u)_\rr}^{p-2}\snr{\dd u-(\dd u)_\rr}^2 \dx \\*[0.5ex]
     &&\qquad\quad +  \frac{1}{\mathcal{E}(u;B_\rr)^p}\dashint_{B_\rr}\snr{\dd u-(\dd u)_\rr}^p  \dx \leqslant 1.
	\end{eqnarray*}	
	Since $\snr{(\dd u )_\rr} >0$ we have that the hypothesis of \cite[Lemma~3.2]{dqc} are satisfied with
	\begin{equation}\label{A}
	\mathscr{A} := \partial^2 F((\dd u )_\rr)\snr{(\dd u )_\rr}^{2-p}.
	\end{equation}
	Then,	
	\begin{eqnarray*}
	\modulo{ \ \dashint_{B_\rr} \mathscr{A} \langle \dd u_0, \dd \F \rangle \ \dx} &\leqslant& \frac{c \norma{\dd \F}_\infty \snr{(\dd u )_\rr}^\frac{2-p}{2}}{\mathcal{E}(u;B_\rr)^\frac{p}{2}} \left( \rr^m \dashint_{B_\rr} \snr{f}^m \ \dx\right)^\frac{1}{m}\\*[0.5ex]
		&& + c\norma{\dd \F}_\infty \mu \left(\frac{\mathcal{E}(u;B_\rr)}{\snr{(\dd u)_\rr}} \right)^\frac{1}{p} \left[1 + \left(\frac{\mathcal{E}(u;B_\rr)}{\snr{(\dd u)_\rr}}\right)^\frac{p-2}{2} \right]\\*[0.5ex]
		&\stackrel{\eqref{l2 2},\eqref{l2 3}}{\leqslant}& c \eps_1 \norma{\dd \F}_\infty +c \norma{\dd \F}_\infty \mu(\eps_0)^\frac{1}{p}\big[1+ \eps_0^\frac{p-2}{2} \big].
	\end{eqnarray*}
	Fix $\eps >0$ and let $\delta \equiv \delta(\data,\eps)>0$ be the one given by \cite[Lemma~2.4]{kumi} and choose $\eps_0$ and $\eps_1$ sufficiently small such that
	\begin{equation}\label{choice_eps}
	c \, \eps_1 + c \mu(\eps_0)^\frac{1}{p}\big[1+ \eps_0^\frac{p-2}{2} \big] \leqslant \delta   .
    \end{equation}
	With this choice of $\eps_0$ and $\eps_1$ it follows that $u_0$ is almost $\mathscr{A}$-harmonic on $B_\rr$, in the sense that
	$$
	\modulo{ \ \dashint_{B_\rr}\mathscr{A}\langle \dd u_0, \dd \F \rangle \ \dx} \leqslant\delta \norma{\dd \F}_\infty,
	$$
	with $\mathscr{A}$ as in \eqref{A}. Hence, by \cite[Lemma~2.4]{kumi}  we obtain that there exists $h_0 \in W^{1,2}(B_\rr;\erN)$ which is $\mathscr{A}$-harmonic, i.e.
	$$
	\int_{B_\rr}\mathscr{A}\langle \dd h_0, \dd \F \rangle \ \dx =0 \qquad \mbox{for all } \F \in C^\infty_c(B_\rr;\erN),
	$$
	such that
	\begin{equation} \label{l2 6}
     \dashint_{B_{3\rr/4}}\snr{\dd h_0}^2 \dx + d^{p-2}\dashint_{B_{3\rr/4}}\snr{\dd h_0}^p \dx \leqslant 8^{2np}\,,
	\end{equation}
    and 
    	\begin{equation} \label{l2 6.2}
    	\dashint_{B_{3\rr/4}}\modulo{\frac{u_0-h_0}{\rr}}^2 + d^{p-2}\modulo{\frac{u_0-h_0}{\rr}}^p \dx \leqslant \eps.
    \end{equation}
    We choose now~$\tau_0\in (0,2^{-10})$, which will be fixed later on, and estimate
    \begin{eqnarray}\label{l2 7}
	&& \dashint_{B_{2\tau_0\rr}}\modulo{\frac{u_0(x)-h_0(x_{0})-\langle Dh_0(x_{0}),x-x_0\rangle}{\tau_0\rr}}^2 \dx\nonumber \\*[0.5ex]
	&& \quad  \leqslant c \,\dashint_{B_{2\tau_0\rr}}\modulo{\frac{h_0(x)-h_0(x_0)-\langle Dh_0(x_0),x-x_0\rangle}{\tau_0\rr}}^2 \dx+c\,\dashint_{B_{2\tau_0\rr}}\modulo{\frac{u_0-h_0}{\tau_0\rr}}^2 \dx \nonumber \\*[0.5ex]
	&& \quad \stackrel{\eqref{l2 6.2}}{\leqslant} c(\tau_0\rr)^2\sup_{B_{\rr/2}}\snr{\dd^2 h_0}^2 +\frac{c\varepsilon}{\tau_0^{n+2}}\nonumber \\*[0.5ex]
	&&\quad \leqslant c\,\tau_0^{2}\dashint_{B_{3\rr/4}}\snr{Dh_0}^{2} \dx +\frac{c\varepsilon}{\tau_0^{n+2}}\notag\\*[0.5ex] 
	&&\quad \stackrel{\eqref{l2 6}}{\leqslant} c\,\tau_0^{2}+\frac{c\varepsilon}{\tau_0^{n+2}},
    \end{eqnarray}
    where~$c\equiv c(\data)>0$ and where we have used the following property of ~$\mathscr{A}$-harmonic functions 
   \begin{equation}\label{property_A_arm}
    \rr^\gamma \sup_{B_{\rr/2}}\snr{\dd^2 h_0}^\gamma \leqslant c \, \dashint_{B_{3\rr/4}}\snr{\dd h_0}^\gamma \dx\,,
   \end{equation}
     with~$\gamma>1$ and~$c$ depending on~$n$, $N$, and on the ellipticity constants of~$\mathscr{A}$.
     
     Now, choosing
    \[
    \eps := \tau_0^{n+2p}\,,
    \]
    we have that this together with~\eqref{choice_eps} gives that~$\eps_0 \equiv \eps_0(\data,\tau_0)$ and~$\eps_1\equiv \eps_1(\data,\tau_0)$. Recalling the definition of~$u_0$ in~\eqref{u0} and~\eqref{l2 7} we eventually arrive at
    \begin{eqnarray}\label{l2 8}
	&& \dashint_{B_{2\tau_0\rr}}\frac{\snr{u-(u)_{\rr}-\langle(\dd u)_{\rr},x-x_0\rangle-\snr{(\dd u)_\rr}^\frac{2-p}{2}\mathcal{E}(u;B_\rr)^{p/2}\left(h_0(x_0)-\langle  \dd h_0(x_0),x-x_0\rangle\right)}^2}{(\tau_0\rr)^2}\dx\notag\\*[0.5ex]
	&& \hspace{5cm} \leqslant c\,\snr{(\dd u)_\rr}^{2-p}\mathcal{E}(u;B_\rr)^p\tau_0^{2}\,,
    \end{eqnarray}
   for~$c\equiv c(\textnormal{\texttt{data}})>0$.  By a similar computation, always using~\eqref{property_A_arm},~\eqref{l2 6} and~\eqref{l2 6.2}, we obtain that
    \begin{eqnarray*}
	&& d^{p-2}\dashint_{B_{2\tau_0\rr}}\left|\frac{u_0-h_0(x_0)-\langle \dd h_0(x_0),x-x_0\rangle}{\tau_0\rr}\right|^{p} \dx\nonumber \\*[0.5ex]
	&&\qquad  \leqslant cd^{p-2}(\tau_0\rr)^{p}\sup_{B_{\rr/2}}\snr{\dd^{2}h_0}^{p}+\frac{c\,\varepsilon}{\tau_0^{n+p}}\leqslant c\,\tau_0^{p}.
    \end{eqnarray*}
  In this way, as for~\eqref{l2 8}, by the definition of~$u_0$ in~\eqref{u0}, we eventually arrive at
   \begin{eqnarray}\label{l2 9}
	&&\dashint_{B_{2\tau_0\rr}}\frac{\snr{u-(u)_{\rr}-\langle(\dd u)_{\rr},x-x_0\rangle-\snr{(\dd u)_\rr}^\frac{2-p}{2}\mathcal{E}(u;B_\rr)^{p/2}\left(h_0(x_0)-\langle  \dd h_0(x_0),x-x_0\rangle\right)}^p}{(\tau_0\rr)^p}\dx\notag\\*[0.5ex]
	&&\hspace{5cm} \leqslant c \, d^{2-p}\snr{(\dd u)_\rr}^\frac{p(2-p)}{2}\mathcal{E}(u;B_\rr)^\frac{p^2}{2} \tau_0^p\notag\\*[0.5ex]
	&& \hspace{5cm} \leqslant c \, \mathcal{E}(u;B_\rr)^p \tau_0^2\,,
    \end{eqnarray}
    with~$c\equiv c(\textnormal{\texttt{data}})$.
    
    Denote now with~$\ell_{2\tau_0\rr}$ the unique affine function such that
     \[
	\ell_{2\tau_0\rr}\mapsto \min_{\ell \ \text{affine}}\dashint_{B_{2\tau_0\rr}}\snr{u-\ell}^{2} \dx.
    \]
    Hence, by~\eqref{l2 8} and~\eqref{l2 9}, we conclude that
    \begin{equation}\label{l2 10}
    \dashint_{B_{2\tau_0\rr}}\snr{(\dd u)_\rr}^{p-2}\modulo{\frac{u-\ell_{2\tau_0\rr}}{2\tau_0\rr}} + \modulo{\frac{u-\ell_{2\tau_0\rr}}{2\tau_0\rr}}^p \dx \leqslant c\, \tau^2 \mathcal{E}(u;B_\rr)^p.
    \end{equation}
    Notice that we have also used the property that
    \[
    \dashint_{B_\rr}\snr{u-\ell_{\rr}}^p \dx \leqslant c\,\dashint_{B_\rr}\snr{u -\ell}^p\dx\,,
    \]
   for~$p\geqslant 2$,~$c \equiv c(n,N,p)>0$ and for any affine function~$\ell$; see~\cite[Lemma~2.3]{kumi}.
   
  Recalling the definition of the excess functional~$\mathcal{E}(\cdot)$, in~\eqref{eccess},  we can estimate the following quantity as follows
\begin{eqnarray}\label{l2 11}
	\snr{D\ell_{2\tau_0\rr}-(Du)_{\rr}} &\leqslant& \snr{D\ell_{2\tau_0\rr}-(Du)_{2\tau_0\rr}}+\snr{(Du)_{2\tau_0\rr}-(Du)_{\rr}}\nonumber \\*[0.5ex]
	&\leqslant& c\,\left(\dashint_{B_{2\tau_0\rr}}\snr{Du-(Du)_{2\tau_0\rr}}^{2} \dx\right)^{\frac{1}{2}}+\left(\dashint_{B_{2\tau_0\rr}}\snr{Du-(Du)_{\rr}}^{2} \dx\right)^{\frac{1}{2}}\nonumber \\*[0.5ex]
	&\stackrel{\eqref{minav}}{\leqslant}& \frac{c}{\tau_0^{n/2}}\left(\dashint_{B_{\rr}}\snr{Du-(Du)_{\rr}}^{2} \dx\right)^{\frac{1}{2}}\nonumber \\*[0.5ex]
	&=&\frac{c\snr{(Du)_{\rr}}^{\frac{2-p}{2}}}{\tau_0^{n/2}}\left(\dashint_{B_{\rr}}\snr{(Du)_{\rr}}^{p-2}\snr{Du-(Du)_{\rr}}^{2} \dx\right)^{\frac{1}{2}}\nonumber \\
	&\leqslant&\frac{c(n)}{\tau_0^{n/2}}\left(\frac{\mathcal{E}(u,B_\rr)}{\snr{(Du)_\rr}}\right)^{\frac{p}{2}}\snr{(Du)_{\rr}},
\end{eqnarray}
where we have used the following property of the affine function~$\ell_{2\tau_0\rr}$
\[
\snr{\dd \ell_{2\tau_0\rr} -(\dd u)_{2\tau_0\rr}}^p \leqslant c \, \dashint_{B_{2\tau_0\rr}}\snr{\dd u -(\dd u)_{2\tau_0\rr}}^p \dx \,,
\]
for a constant~$c \equiv c(n,p)>0$; see for example~\cite[Lemma~2.2]{kumi}.

Now, starting from~\eqref{l2 2} and~\eqref{choice_eps}, we further reduce the size of~$\eps_0$ such that
\begin{equation}\label{l2 12}
	\left(\frac{\mathcal{E}(u,B_{\rr})}{\snr{(Du)_{\rr}}}\right)^{\frac{p}{2}}\stackrel{\eqref{l2 2}}{\leqslant} \eps_0^{\frac{p}{2}} \leqslant\frac{\tau_0^{n/2}}{8c(n)},
\end{equation}
where $c\equiv c(n)$ is the same constant appearing in \eqref{l2 11}. Thus, combining~\eqref{l2 11} and~\eqref{l2 12}, we get
\begin{eqnarray}\label{l2 13}
	\snr{D\ell_{2\tau_0\rr}-(Du)_\rr}\leqslant\frac{\snr{(Du)_\rr}}{8}.
\end{eqnarray}
The information provided by~\eqref{l2 12} combined with~\eqref{l2 10} allow us to conclude that
\begin{equation}\label{l2 14}
\dashint_{B_{2\tau_0\rr}}\snr{D\ell_{2\tau_0\rr}}^{p-2}\modulo{\frac{u-\ell_{2\tau_0\rr}}{2\tau_0\rr}} + \modulo{\frac{u-\ell_{2\tau_0\rr}}{2\tau_0\rr}}^p \dx \leqslant c\, \tau^2 \mathcal{E}(u;B_\rr)^p.
\end{equation}
By triangular inequality and~\eqref{l2 13} we also get
\[
	\snr{D\ell_{2\tau_0\rr}}\geqslant \snr{(Du)_{\rr}}-\snr{D\ell_{2\tau_0\rr}-(Du)_{\rr}} \stackrel{\eqref{l2 13}}{\geqslant} \frac{7\snr{(Du)_{\rr}}}{8}
\]
which, therefore, implies that
\begin{eqnarray}\label{l2 15}
	&& \dashint_{B_{\tau_0\rr}}\snr{D\ell_{2\tau_0\rr}}^{p-2}\snr{Du-D\ell_{2\tau_0\rr}}^{2} \dx+\inf_{z\in \ernN}\dashint_{B_{\tau_0\rr}}\snr{Du-z}^p \dx\nonumber \\*[0.5ex]
	&&\qquad  \stackrel{\eqref{cccp}}{\leqslant} c\,\dashint_{B_{2\tau_0\rr}}\snr{D\ell_{2\tau_0\rr}}^{p-2}\left|\frac{u-\ell_{2\tau_0\rr}}{2\tau_0\rr}\right|^{2}+\left|\frac{u-\ell_{2\tau_0\rr}}{2\tau_0\rr}\right|^{p} \dx\nonumber \\*[0.5ex]
	&&\qquad \qquad   +\frac{c}{\snr{D\ell_{2\tau_0\rr}}^{p-2}}\left((2\tau_0\rr)^{m}\dashint_{B_{2\tau_0\rr}}\snr{f}^{m} \dx\right)^{\frac{2}{m}}\nonumber \\*[0.5ex]
	&&\qquad  \stackrel{\eqref{l2 14}}{\leqslant} c\,\tau_0^{2}\mathcal{E}(u,B_\rr)^{p}+\frac{c\tau_0^{2-2n/m}}{\snr{(Du)_\rr}^{p-2}}\left(\rr^{m}\dashint_{B_\rr}\snr{f}^{m} \dx\right)^{\frac{2}{m}}\,,
\end{eqnarray}
where~$c\equiv c(\data)>0$.
By triangular inequality, we can further estimate
\begin{eqnarray*}
	&&\dashint_{B_{\tau_0\rr}}\snr{(Du)_{\tau_0\rr}}^{p-2}\snr{Du-(Du)_{\tau_0\rr}}^{2} \dx \\*[0.5ex] && \qquad \leqslant
	c\,\dashint_{B_{\tau_0\rr}}\snr{D\ell_{\tau_0\rr}-(Du)_{\tau_0\rr}}^{p-2}\snr{Du-(Du)_{\tau_0\rr}}^{2} \dx\nonumber \\*[0.5ex]
	&&\qquad\quad +c\,\dashint_{B_{\tau_0\rr}}\snr{D\ell_{2\tau_0\rr}-D\ell_{\tau_0\rr}}^{p-2}\snr{Du-(Du)_{\tau_0\rr}}^{2} \dx\nonumber\\*[0.5ex]
	&&\qquad\quad +c\,\dashint_{B_{\tau_0\rr}}\snr{D\ell_{2\tau_0\rr}}^{p-2}\snr{Du-(Du)_{\tau_0\rr}}^{2} \dx\nonumber \\*[0.5ex]
	&&\qquad= \mbox{I}_1+\mbox{I}_2+\mbox{I}_3\,,
\end{eqnarray*}
where~$c\equiv c(p)>0$. We now separately estimate the previous integrals. We begin considering~$\mbox{I}_1$. By Young and triangular inequalities we get
\begin{eqnarray*}
	\mbox{I}_1 &\leqslant& c\snr{D\ell_{\tau_0\rr}-(Du)_{\tau_0\rr}}^{p}+c\,\dashint_{B_{\tau_0\rr}}\snr{Du-(Du)_{\tau_0\rr}}^{p} \dx\nonumber \\*[0.5ex]
	&\leqslant& c\,\dashint_{B_{\tau_0\rr}}\snr{Du-(Du)_{\tau_0\rr}}^{p} \dx\\*[0.5ex]
	&\stackrel{\eqref{minav}}{\leqslant}& c\inf_{z\in \erN}\dashint_{B_{\tau_0\rr}}\snr{Du-z}^{p} \dx\nonumber \\*[0.5ex]
	&\stackrel{\eqref{l2 15}}{\leqslant}& c\,\tau_0^{2}\mathcal{E}(u,B_\rr)^{p}+\frac{c\tau_0^{2-2n/m}}{\snr{(Du)_\rr}^{p-2}}\left(\rr^{m}\dashint_{B_\rr}\snr{f}^{m} \dx\right)^{\frac{2}{m}}\,,
\end{eqnarray*}
with $c\equiv c(\data)>0$. In a similar fashion, we can treat the integral~$\mbox{I}_2$ 
\begin{eqnarray*}
	\mbox{I}_2 &\leqslant& c\,\snr{D\ell_{2\tau_0\rr}-D\ell_{\tau_0\rr}}^{p}+c\,\dashint_{B_{\tau_0\rr}}\snr{Du-(Du)_{\tau_0\rr}}^{p} \dx\nonumber \\*[0.5ex]
	&\stackrel{\eqref{minav}}{\leqslant}& c\,\dashint_{B_{2\tau_0\rr}}\left|\frac{u-\ell_{2\tau_0\rr}}{2\tau_0\rr}\right|^{p} \dx+c\inf_{z\in \ernN}\dashint_{B_{\tau_0\rr}}\snr{Du-z}^{p} \dx\nonumber \\*[0.5ex]
	&\stackrel{\eqref{l2 14},\eqref{l2 15}}{\leqslant}& c\,\tau_0^{2}\mathcal{E}(u,B_\rr)^{p}+\frac{c\tau_0^{2-2n/m}}{\snr{(Du)_\rr}^{p-2}}\left(\rr^{m}\dashint_{B_\rr}\snr{f}^{m} \dx\right)^{\frac{2}{m}}\,,
\end{eqnarray*}
where we have used the following property of the affine function~$\ell_{2\tau_0\rr}$
\[
\snr{\dd \ell_{2\tau_0\rr} - \dd \ell_{\tau_0\rr} }^p \leqslant c\,\dashint_{B_{2\tau_0\rr}}\left|\frac{u-\ell_{2\tau_0\rr}}{2\tau_0\rr}\right|^{p} \dx\,,
\]
for a given constant~$c \equiv c(n,p)>0$; see~\cite[Lemma~2.2]{kumi}. Finally, the last integral~$\mbox{I}_3$ can be treated recalling~\eqref{l2 15} and~\eqref{minav}, i.~\!e.
\[
\mbox{I}_3 \leqslant c\,\tau_0^{2}\mathcal{E}(u,B_\rr)^{p}+\frac{c\tau_0^{2-2n/m}}{\snr{(Du)_\rr}^{p-2}}\left(\rr^{m}\dashint_{B_\rr}\snr{f}^{m} \dx\right)^{\frac{2}{m}}.
\]
All in all, combining the previous estimate
\begin{eqnarray*}
\mathcal{E}(u;B_{\tau_0\rr}) &\leqslant& c\,\tau_0^{2/p}\mathcal{E}(u,B_\rr)+\frac{c\tau_0^{2/p-2n/(mp)}}{\snr{(Du)_\rr}^\frac{p-2}{p}}\left(\rr^{m}\dashint_{B_\rr}\snr{f}^{m} \dx\right)^{\frac{2}{mp}}\\*[0.5ex]
&\stackrel{\eqref{l2 3}}{\leqslant}& c\,\tau_0^{2/p}\mathcal{E}(u,B_\rr)+c\tau_0^{2/p-2n/(mp)}\eps_1^{2/p}\mathcal{E}(u;B_{\tau_0\rr})\\*[0.5ex]
&\leqslant& c_0\tau_0^{2/p}\mathcal{E}(u;B_{\tau_0\rr})\,,
\end{eqnarray*}
up to choosing~$\eps_1$ such that
\[
\eps_1 \leqslant\tau_0^{n/m}.
\]
	\subsubsection*{Step 2: proof of \eqref{l3 2}} 	   
	The proof follows by \cite[Lemma 2.4]{dqc} which yields
	\begin{eqnarray*}
		\mathcal{E}(u; B_{\tau_0 \rr}(x_0))^\frac{p}{2} &\leqslant& \frac{2^{3p}}{\tau_0^{n/2}} \mathcal{E}(u; B_{ \rr}(x_0))^\frac{p}{2}\\*[0.5ex]
		&\stackrel{\eqref{l3 1}}{\leqslant}&  \frac{2^{3p}}{\tau_0^{n/2}} \eps_1^{-1} \snr{(\dd u )_{B_\rr (x_0)}}^\frac{2-p}{2}	\left(\rr^m \dashint_{B_{\rr}(x_{0})}\snr{f}^m \ \dx \right)^\frac{1}{m} \\*[0.5ex]
		&\stackrel{\eqref{l2 2}}{\leqslant}& \frac{2^{6(p-1)}}{\tau_0^{n(p-1)/p}} \eps_0^\frac{p-2}{2} \eps_1^{-1} \mathcal{E}(u;B_{\tau_0\rr})^\frac{2-p}{2}	\left(\rr^m \dashint_{B_{\rr}(x_{0})}\snr{f}^m \ \dx \right)^\frac{1}{m}.
	\end{eqnarray*}
	Multiplying both sides by $\mathcal{E}(u;B_{\tau_0\rr})^\frac{p-2}{2}$ we get the desired estimate.
\end{proof}

\subsection{The degenerate scenario}
It remains to considering the case when condition \eqref{l2 2} does not hold true. We start with two technical lemmas. The first one is an analogous of the Caccioppoli inequality \eqref{cccp}, where we take in consideration the eventuality $z_0 =0$.

\begin{lemma}
	Under assumptions \eqref{assf}$_{1,2,3}$, \eqref{sqc} and \eqref{f}, let $u\in W^{1,p}(\Omega,\mathds{R}^{N})$ be a local minimizer of functional \eqref{main_fnc}.
	For every ball $B_{\rr}(x_{0})\Subset \Omega$ and any $u_{0}\in \mathds{R}^{N}$, $z_{0}\in \mathds{R}^{N\times n}$ it holds that
	\begin{eqnarray} \label{cccp 2}
		 		\mathcal{E}(u,z_0;B_{\rr/2}(x_0))^p &\leqslant& c \ \dashint_{B_\rr(x_0)} \snr{z_0}^{p-2}\modulo{\frac{u-\ell}{\rr}}^2
		 		+      \modulo{\frac{u-\ell}{\rr}}^p \ \dx \\*[0.5ex]  
		 		&&+\, c \left(\rr^{m} \dashint_{B_\rr(x_0)} \snr{f}^m \ \dx \right)^\frac{p}{m(p-1)}\,,\notag
	\end{eqnarray}
	where~$\mathcal{E}(\cdot)$ is defined in~\eqref{eccess}, $\ell(x):=u_{0}+\langle z_0,x-x_{0}\rangle$ and $c\equiv c(n,N,\lambda,\Lambda,p)$. 
\end{lemma}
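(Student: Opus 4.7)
The plan is to follow verbatim the line of argument of \eqref{cccp}, with one essential modification in the control of the inhomogeneity term. In the proof of \eqref{cccp}, the term $\mathrm{I}_2 = \int_{B_{\tau_2}(x_0)} f\cdot\varphi_1\dx$ is estimated via $L^m$--$L^{m'}$ duality combined with the Sobolev embedding $W^{1,2}_0 \hookrightarrow L^{2^*}$; this naturally couples $f$ with the quadratic gradient energy of $\varphi_1$, which after Young's inequality with the $(2,2)$-pair and absorption through the equivalence $\snr{V_{\snr{z_0}}(\cdot)}^2 \gtrsim \snr{z_0}^{p-2}\snr{\cdot}^2$ forces the factor $\snr{z_0}^{2-p}$ visible in \eqref{2}. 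This route collapses when $z_0=0$, and this is the main obstacle to overcome.

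The remedy, already implemented in Step~2 of the proof of \eqref{revh}, is to pair $f$ against the $p$-energy of $\varphi_1$ using the embedding $W^{1,p}_0 \hookrightarrow L^{p^*}$ and Young's inequality with the couple $(p,p')$. Concretely, I would fix $\rr/2\leq \tau_1 < \tau_2 \leq \rr$, choose $\eta\in C^1_c(B_{\tau_2}(x_0))$ with $\mathds{1}_{B_{\tau_1}(x_0)}\leq \eta \leq \mathds{1}_{B_{\tau_2}(x_0)}$ and $\snr{D\eta}\lesssim (\tau_2-\tau_1)^{-1}$, set $\varphi_1 := \eta(u-\ell)$, $\varphi_2 := (1-\eta)(u-\ell)$, and apply \eqref{sqc} together with the minimality of $u$ to arrive, as in \eqref{0}, at
\begin{equation*}
c\int_{B_{\tau_2}(x_0)}\snr{V_{\snr{z_0}}(D\varphi_1)}^2\dx \leq \mathrm{I}_1 + \mathrm{I}_2 + \mathrm{I}_3.
\end{equation*}
The terms $\mathrm{I}_1$ and $\mathrm{I}_3$ are controlled exactly as in \eqref{1} (nothing there needs $\snr{z_0}>0$), while for $\mathrm{I}_2$ I would combine H\"older with the conjugate exponents $((p^*)',p^*)$, Sobolev--Poincar\'e on $\varphi_1 \in W^{1,p}_0(B_{\tau_2}(x_0);\erN)$, and Young with $(p,p')$ to get
\begin{equation*}
\mathrm{I}_2 \leq \varepsilon \int_{B_{\tau_2}(x_0)}\snr{D\varphi_1}^p\dx + \frac{c\snr{B_\rr(x_0)}}{\varepsilon^{1/(p-1)}}\left(\rr^{(p^*)'}\dashint_{B_\rr(x_0)}\snr{f}^{(p^*)'}\dx\right)^{p/((p^*)'(p-1))}.
\end{equation*}
Since $(p^*)'\leq m$ thanks to \eqref{f.0}, a final application of H\"older's inequality upgrades the last integral to the desired $(\rr^m\dashint_{B_\rr(x_0)}\snr{f}^m\dx)^{p/(m(p-1))}$-quantity, the powers of $\rr$ matching exactly since $(p^*)'\cdot p/((p^*)'(p-1)) = p/(p-1) = m\cdot p/(m(p-1))$.

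To conclude, I would use that $\snr{V_{\snr{z_0}}(D\varphi_1)}^2 \gtrsim \snr{D\varphi_1}^p$ for $p\geq 2$ (by \eqref{equiv.1}), so the $\varepsilon$-term is absorbed into the left-hand side for $\varepsilon$ small enough. A standard hole-filling, Lemma~\ref{l5}, and a final use of \eqref{equiv.1} to rewrite $\snr{V_{\snr{z_0}}(Du-z_0)}^2$ as $\snr{z_0}^{p-2}\snr{Du-z_0}^2+\snr{Du-z_0}^p$ then deliver \eqref{cccp 2}. The crux of the argument is thus the replacement of the $(2,2^*)$-duality used in \eqref{2} by the $(p,p^*)$-duality: this removes the $\snr{z_0}^{2-p}$ factor and produces precisely the $p/(m(p-1))$-exponent on the inhomogeneity that is insensitive to the value of $z_0$, in particular at $z_0=0$.
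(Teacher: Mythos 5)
Your proposal is correct and follows essentially the same route as the paper: both replace the $(2,2^*)$ duality used in the proof of \eqref{cccp} by the $(p,p^*)$ duality — pairing $f$ against the $p$-energy of $\varphi_1$ via $W^{1,p}_0 \hookrightarrow L^{p^*}$, applying Young with conjugates $(p,p')$, and absorbing via $\snr{D\varphi_1}^p \lesssim \snr{V_{\snr{z_0}}(D\varphi_1)}^2$. The only cosmetic difference is the order of H\"older applications: the paper applies H\"older with $(m,m')$ to $f\cdot\varphi_1$ and then passes from $L^{m'}$ to $L^{p^*}$ on $\varphi_1$ by power-mean monotonicity, whereas you apply H\"older with $((p^*)',p^*)$ directly and then upgrade $f$ from $L^{(p^*)'}$ to $L^m$ — two symmetric moves yielding the same estimate.
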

\begin{proof}
	The proof is analogous to estimate \eqref{cccp}, up to treating in a different way the term $\mbox{I}_2$ in \eqref{0}, taking in consideration the eventuality $z_0=0$. Exploiting \eqref{f} and fact that $\varphi_{1}\in W^{1,p}_{0}(B_{\tau_{2}}(x_{0}),\mathds{R}^{N})$, an application of the Sobolev-Poincar\'e inequality yields
	\begin{eqnarray}
		\mbox{I}_{2}&\leqslant&\snr{B_{\tau_{2}}(x_{0})}\left(\tau_{2}^{m}\dashint_{B_{\tau_{2}}(x_{0})}\snr{f}^{m}\  \dx\right)^{1/m}\left(\tau_{2}^{-m'}\dashint_{B_{\tau_{2}}(x_{0})}\snr{\varphi_{1}}^{m'}\  \dx\right)^{\frac{1}{m'}}\nonumber \\*[0.5ex]
		&\leqslant&\snr{B_{\tau_{2}}(x_{0})}\left(\tau_{2}^{m}\dashint_{B_{\tau_{2}}(x_{0})}\snr{f}^{m}\  \dx\right)^{1/m}\left( \ \dashint_{B_{\tau_{2}}(x_{0})}\left| \ \frac{\varphi_{1}}{\tau_{2}} \ \right|^{p^*}\  \dx\right)^{\frac{1}{p^*}}\nonumber \\*[0.5ex]
		&\leqslant&\snr{B_{\tau_{2}}(x_{0})}\left(\tau_{2}^{m}\dashint_{B_{\tau_{2}}(x_{0})}\snr{f}^{m} \dx\right)^{1/m}\left( \ \dashint_{B_{\tau_{2}}(x_{0})}\snr{D\varphi_{1}}^{p} \dx\right)^{\frac{1}{p}}\nonumber \\*[0.5ex]
		&\leqslant&\varepsilon\int_{B_{\tau_{2}}(x_{0})}\snr{V_{\snr{z_{0}}}(D\varphi_{1})}^{2} \dx+\frac{c\snr{B_{\rr}(x_{0})}}{\eps^{1/(p-1)}}\left(\rr^{m}\dashint_{B_{\rr}(x_{0})}\snr{f}^{m} \dx\right)^{\frac{p}{m(p-1)}},
	\end{eqnarray}
	where $c\equiv c(n,N,m)$ and we also used that $\rr/2\leqslant\tau_{2}\leqslant\rr$. Hence, proceeding as in the proof of \eqref{cccp}, we obtain that
	\begin{eqnarray*}
	 &&	\int_{B_{\tau_{1}}(x_{0})}\snr{V_{\snr{z_{0}}}(Du-z_{0})}^{2} \dx\\*[0.5ex]
	 &&\quad \leqslant c\int_{B_{\tau_{2}}(x_{0})\setminus B_{\tau_{1}}(x_{0})}\snr{V_{\snr{z_{0}}}(Du-z_{0})}^{2}+\left| \ V_{\snr{z_{0}}}\left(\frac{u-\ell}{\tau_{2}-\tau_{1}}\right)\ \right|^{2} \dx \nonumber \\*[0.5ex]
	 &&\qquad+\frac{c\snr{B_{\rr}(x_{0})}}{\eps^{1/(p-1)}}\left(\rr^{m}\dashint_{B_{\rr}(x_{0})}\snr{f}^{m} \dx\right)^{\frac{p}{m(p-1)}},
	\end{eqnarray*}
	with $c\equiv c(n,N,\lambda,\Lambda,p)$. Concluding as in the proof of \eqref{cccp}, we eventually arrive at \eqref{cccp 2}.
\end{proof}
We will also need the following result.
\begin{lemma}\label{l7}
	Under assumptions \eqref{assf}$_{1,2,3}$, \eqref{sqc} and \eqref{f}, let $u\in W^{1,p}(\Omega,\mathds{R}^{N})$ be a local minimizer of functional \eqref{main_fnc}.  For any $ B_\rr(x_{0}) \Subset \Omega $ and any $s \in (0,\infty)$ it holds that
	\begin{eqnarray}\label{p arm}
		\modulo{ \ \dashint_{B_\rr(x_{0})} \langle\snr{\dd u}^{p-2}\dd u,\dd \F \rangle \ \dx } &\leqslant& s\norma{\dd \F}_{L^\infty(B_\rr (x_{0}))} \left( \ \dashint_{B_\rr(x_{0})} \snr{\dd u}^p \ \dx \right)^\frac{p-1}{p} \nonumber\\*[0.5ex]
		&& +c\,\omega(s)^{-1}\norma{\dd \F}_{L^\infty(B_\rr (x_{0}))} \dashint_{B_\rr (x_{0})} \snr{\dd u}^p \ \dx \nonumber\\*[0.5ex]
		&& +c\,\norma{\dd \F}_{L^\infty(B_\rr (x_{0}))} \left( \rr^m \dashint_{B_\rr(x_{0})} \snr{f}^m \ \dx\right)^{1/m},
	\end{eqnarray}
	for any $\F \in C^\infty_0(B_\rr(x_{0}),\erN)$, with $c \equiv c(n,N,\Lambda,\lambda,p)$.
\end{lemma}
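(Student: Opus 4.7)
The plan is to begin from the Euler--Lagrange system satisfied by the local minimizer $u$. Testing $\mathcal{F}(u+t\F)$ against an arbitrary $\F \in C^\infty_0(B_\rr(x_0),\erN)$ and differentiating at $t=0$ produces
$$\dashint_{B_\rr(x_0)} \langle \partial F(Du), D\F\rangle \dx = \dashint_{B_\rr(x_0)} f \cdot \F \dx.$$
I would then add and subtract the quantities $\partial F(0)$ and $\snr{Du}^{p-2}Du$ inside the target integral, writing
$$\dashint_{B_\rr(x_0)} \langle \snr{Du}^{p-2}Du, D\F\rangle \dx = \dashint_{B_\rr(x_0)} \langle \snr{Du}^{p-2}Du - \partial F(Du) + \partial F(0), D\F\rangle \dx + \dashint_{B_\rr(x_0)} f\cdot \F \dx.$$
Observing that the growth bound \eqref{df} forces $\partial F(0)=0$, the full burden reduces to estimating the first integrand on the right, which is precisely the object controlled by the $p$-Laplacian degeneracy condition \eqref{p0.1}.

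The key device is a level-set splitting at the threshold $\omega(s)$. On $\{\snr{Du}\le \omega(s)\}\cap B_\rr(x_0)$, assumption \eqref{p0.1} applies directly to give $\snr{\snr{Du}^{p-2}Du-\partial F(Du)}\le s\snr{Du}^{p-1}$, so this portion contributes at most $s\norma{D\F}_\infty \dashint \snr{Du}^{p-1}\dx$, which H\"older's inequality dominates by $s\norma{D\F}_\infty\bigl(\dashint \snr{Du}^p\dx\bigr)^{(p-1)/p}$: this yields the first term of \eqref{p arm}. On the complementary set $\{\snr{Du}>\omega(s)\}$, I combine the identity $\snr{\snr{Du}^{p-2}Du}=\snr{Du}^{p-1}$ with the growth \eqref{df} to get the crude bound $c\snr{Du}^{p-1}$, and then exploit the pointwise inequality $\snr{Du}^{p-1}\le \omega(s)^{-1}\snr{Du}^p$, which holds precisely on this set, to obtain the second term in \eqref{p arm}.

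Finally, for the inhomogeneity $\dashint f\cdot \F \dx$, since $\F$ is compactly supported in $B_\rr(x_0)$ one has the elementary bound $\norma{\F}_\infty \le 2\rr\norma{D\F}_\infty$, and H\"older's inequality with exponent $m$ then produces the third term $c\norma{D\F}_\infty(\rr^m \dashint \snr{f}^m\dx)^{1/m}$. Summing the three contributions gives \eqref{p arm}. I do not anticipate any genuine obstacle: the only conceptual point is performing the dichotomic splitting at the exact threshold $\omega(s)$ and recognizing that $\partial F(0)=0$, which together make assumption \eqref{p0.1} directly available and let the factor $\omega(s)^{-1}$ in the right-hand side of \eqref{p arm} emerge naturally from the degenerate regime.
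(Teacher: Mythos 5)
Your proof is correct and follows essentially the same path as the paper: starting from the Euler--Lagrange system, splitting off $\partial F(Du)-\partial F(0)-\snr{Du}^{p-2}Du$, and performing the dichotomy at the level $\omega(s)$ so that \eqref{p0.1} controls the small-gradient set while the crude bound $\snr{Du}^{p-1}\leqslant\omega(s)^{-1}\snr{Du}^{p}$ handles the large-gradient set. The only (harmless) deviation is in the inhomogeneity term: you use the elementary pointwise bound $\norma{\F}_{L^\infty}\leqslant 2\rr\norma{D\F}_{L^\infty}$ for compactly supported $\F$ followed by H\"older, whereas the paper passes through H\"older and the Sobolev--Poincar\'e inequality before bounding $\bigl(\dashint\snr{D\F}^{p}\dx\bigr)^{1/p}$ by $\norma{D\F}_{L^\infty}$; both yield the same third term, and your explicit remark that \eqref{df} forces $\partial F(0)=0$ (implicit in the paper) is accurate.
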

\begin{proof}
	Given the regularity properties of the integrand $F$, we have that a local minimizer $u$ of \eqref{main_fnc} solves weakly the following integral identity (see \cite[Lemma 7.3]{schm}) 
	\begin{equation} \label{l4 1}
	\int_\Omega \big[\langle \partial F(\dd u),\dd \F \rangle - f \cdot \F \big] \ \dx =0 \qquad \mbox{for all } \F \in C^\infty_0(\Omega, \erN).   
	\end{equation}
	Now, fix $\F \in C^\infty_0(B_\rr(x_{0}),\erN)$ and split
	\begin{eqnarray*}
		&&\modulo{ \ \dashint_{B_\rr(x_{0})} \langle\snr{\dd u}^{p-2}\dd u,\dd \F \rangle \ \dx } \\*[0.5ex]
		&&\qquad\stackrel{\eqref{l4 1}}{\leqslant}   \modulo{ \ \dashint_{B_\rr(x_{0})} \langle \partial F(\dd u) -\partial F(0)-\snr{\dd u}^{p-2}\dd u,\dd \F \rangle \ \dx } + \modulo{ \ \dashint_{B_\rr (x_0)} f \cdot \F \ \dx}\\*[0.5ex]
		&&\qquad =: \mbox{I}_1 + \mbox{I}_2.
	\end{eqnarray*}
	We begin estimating the first integral $\mbox{I}_1$. For $s \in (0,\infty)$ we get
	\begin{eqnarray}
		\mbox{I}_1 &\leqslant& \frac{\norma{\dd \F}_{L^\infty(B_\rr (x_{0}))} }{\snr{B_\rr (x_{0})}} \int_{B_\rr(x_{0}) \cap \{\snr{\dd u} \leqslant\omega(s)\}} \snr{\partial F(\dd u) -\partial F(0) -\snr{\dd u}^{p-2}\dd u} \ \dx \nonumber\\*[0.5ex]
		&&  + \frac{\norma{\dd \F}_{L^\infty(B_\rr (x_{0}))} }{\snr{B_\rr (x_{0})}} \int_{B_\rr(x_{0}) \cap \{\snr{\dd u} > \omega(s)\}} \snr{\partial F(\dd u) -\partial F(0) -\snr{\dd u}^{p-2}\dd u} \ \dx \nonumber\\*[0.5ex]
		&\leqslant& s\norma{\dd \F}_{L^\infty(B_\rr (x_{0}))}   \left( \ \dashint_{B_\rr(x_{0})} \snr{\dd u}^p\ \dx \right)^\frac{p-1}{p} \\*[0.5ex]
		&& +  c\,\omega(s)^{-1}\norma{\dd \F}_{L^\infty(B_\rr (x_{0}))} \dashint_{B_\rr(x_{0}) } \snr{\dd u}^{p} \ \dx \nonumber.
	\end{eqnarray}
	On the other hand, the integral $\mbox{I}_2$ can be estimated as follows
	\begin{eqnarray*}
		\mbox{I}_2 &\leqslant& \left(\rr^{m}\dashint_{B_\rr(x_{0})}\snr{f}^{m}\  \dx\right)^{1/m}\left( \ \dashint_{B_\rr(x_{0})}\modulo{\frac{\F}{\rr}}^{m'}\  \dx\right)^{\frac{1}{m'}}\nonumber \\*[0.5ex]
		&\leqslant&\left(\rr^{m}\dashint_{B_{\rr}(x_{0})}\snr{f}^{m}\  \dx\right)^{1/m}\left( \ \dashint_{B_{\rr}(x_{0})}\left| \ \frac{\F}{\rr} \ \right|^{p^*}\  \dx\right)^{\frac{1}{p^*}}\nonumber \\*[0.5ex]
		&\leqslant&\left(\rr^{m}\dashint_{B_{\rr}(x_{0})}\snr{f}^{m} \dx\right)^{1/m}\left( \ \dashint_{B_{\rr}(x_{0})}\snr{\dd \F}^{p} \dx\right)^{\frac{1}{p}}\nonumber \\*[0.5ex]
		&\leqslant& \norma{\dd \F}_{L^\infty(B_\rr(x_0))}\left(\rr^{m}\dashint_{B_{\rr}(x_{0})}\snr{f}^{m} \dx\right)^{1/m}.
	\end{eqnarray*}
	Combining the inequalities above we obtain \eqref{p arm}.
\end{proof}

In this setting the analogous result of Proposition \ref{l2} is the following one.

\begin{proposition}\label{p1}
	Under assumptions \eqref{assf}$_{1,2,3}$, \eqref{sqc} and \eqref{f}, let $u\in W^{1,p}(\Omega,\mathds{R}^{N})$ be a local minimizer of functional \eqref{main_fnc}. Then, for any $\chi \in (0,1]$ and any~$\tau_1 \in (0,2^{-10})$, there exists $\eps_2 \equiv \eps_2(\data,\chi,\tau_1) \in (0,1)$ such that if the smallness conditions
	\begin{equation} \label{p1 1}
	\chi \snr{(\dd u)_{B_\rr(x_0)}} \leqslant\mathcal{E}(u;B_\rr(x_0)), \quad \mbox{and} \quad \mathcal{E}(u;B_\rr(x_0)) \leqslant\eps_2,
	\end{equation}
	are satisfied on a ball $B_\rr(x_0) \subset \ern$, then
	\begin{equation}  \label{p1 2}
	\mathcal{E}(u;B_{\tau_1\rr}(x_0)) \leqslant c_1\tau_1^{\beta_1}\mathcal{E}(u;B_\rr(x_0)) +c_1 \left(\rr^m \dashint_{B_\rr(x_0)} \snr{f}^m \ \dx \right)^\frac{1}{m(p-1)},
	\end{equation}
	for any $\beta_1 \in (0,2\alpha/p)$, with $\alpha \equiv \alpha (n,N,p) \in (0,1)$ is the exponent in \eqref{p1 8}, and $c_1 \equiv c_1(\data,\chi)$.
\end{proposition}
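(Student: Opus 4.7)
The plan is to mirror the structure of the proof of Proposition \ref{l2}, but now replacing the $\mathscr{A}$-harmonic approximation of the nondegenerate scenario with a $p$-harmonic approximation. The compatibility condition \eqref{p1 1}$_1$ ensures $\snr{(\dd u)_\rr} \le \mathcal{E}(u;B_\rr)/\chi$, which combined with the triangular inequality yields $\dashint_{B_\rr}\snr{\dd u}^p \dx \le c(\chi)\mathcal{E}(u;B_\rr)^p$; this is the bound that will drive all subsequent estimates. As in the proof of Proposition \ref{l2}, all balls are concentric and I would omit the center in the notation.

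The first step is to rescale: introduce
\[
v(x):= \frac{u(x) - (u)_{B_\rr}}{\mathcal{E}(u;B_\rr)},
\]
so that $\dashint_{B_\rr}\snr{\dd v}^p \dx \le c(\chi)$, and apply Lemma \ref{l7} to $v$. The first addendum on the right-hand side of \eqref{p arm} is handled by picking $s$ small (to be tuned against $\tau_1$); the second one, carrying the factor $\omega(s)^{-1}$, is reabsorbed using the smallness assumption \eqref{p1 1}$_2$ together with the fact that $\omega(s)^{-1}\mathcal{E}(u;B_\rr)^p \le \omega(s)^{-1}\eps_2^p$, which is small once $\eps_2$ is chosen small depending on $s$ (hence on $\tau_1$ and $\chi$); the third one is the source of the $f$-dependent addendum in \eqref{p1 2}, and will be split off via a standard alternative: either it dominates $\tau_1^{p\alpha}\mathcal{E}(u;B_\rr)^p$, in which case \eqref{p1 2} is trivially true, or it is controlled by $\delta \mathcal{E}(u;B_\rr)$ and can be absorbed into the almost-$p$-harmonicity estimate. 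The outcome is that $v$ is $\delta$-almost $p$-harmonic on $B_\rr$, for any preassigned $\delta>0$.

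Next, the $p$-harmonic approximation lemma of Duzaar--Mingione \cite{dumi1} (in the form used in \cite{dumi,dqc}) produces a $p$-harmonic map $h\in W^{1,p}(B_{3\rr/4};\erN)$ with $L^p$-norm of $\dd h$ bounded in terms of $\data$ and $\chi$, and with $\dashint_{B_{3\rr/4}}\snr{(v-h)/\rr}^p \dx \le \eps$ for an arbitrary $\eps>0$ that depends on $\delta$. The everywhere $C^{1,\alpha}$-regularity of $p$-harmonic systems of Uhlenbeck type gives, for some $\alpha\equiv\alpha(n,N,p)\in(0,1)$ (this is exactly the exponent featured in \eqref{p1 8}), the decay estimate $\rr^{p\alpha}\sup_{B_{\rr/2}}\snr{\dd h - (\dd h)(x_0)}^p \le c\dashint_{B_{3\rr/4}}\snr{\dd h}^p \dx$. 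Comparing $v$ to the first-order Taylor polynomial of $h$ at $x_0$, choosing $\eps = \tau_1^{n+p\alpha+p}$, and undoing the normalization, I obtain on $B_{2\tau_1\rr}$ the $L^p$-oscillation bound
\[
\dashint_{B_{2\tau_1\rr}}\left|\frac{u - \ell_{2\tau_1\rr}}{2\tau_1\rr}\right|^p \dx \le c\tau_1^{p\alpha}\mathcal{E}(u;B_\rr)^p + c\left(\rr^m\dashint_{B_\rr}\snr{f}^m \dx\right)^{\frac{p}{m(p-1)}},
\]
where $\ell_{2\tau_1\rr}$ is the $L^2$-best affine fit on $B_{2\tau_1\rr}$. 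Finally, inserting this bound into the degenerate Caccioppoli inequality \eqref{cccp 2} on $B_{\tau_1\rr}$ (applied with $z_0 = \dd\ell_{2\tau_1\rr}$), and then passing from $\dd\ell_{2\tau_1\rr}$ to $(\dd u)_{B_{\tau_1\rr}}$ via \eqref{minav} and the triangular-inequality argument already exploited in \eqref{l2 11}, one gets \eqref{p1 2}, with $\beta_1<2\alpha/p$ arising after extracting the $p$-th root in the definition \eqref{eccess} of $\mathcal{E}$.

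\textbf{Main obstacle.} The delicate point is the simultaneous tuning of the three smallness parameters: the scale $s$ in Lemma \ref{l7}, which comes with the amplification $\omega(s)^{-1}$, the threshold $\delta$ required by the $p$-harmonic approximation lemma, and the final ceiling $\eps_2$ on $\mathcal{E}(u;B_\rr)$. Since $\omega$ is only a modulus of continuity at the origin, one must first choose $s$ small depending on $\tau_1$ and $\chi$, then fix $\eps_2$ small enough to reabsorb the factor $\omega(s)^{-1}$, which is precisely what forces the dependence $\eps_2\equiv\eps_2(\data,\chi,\tau_1)$. A secondary technical issue is the dichotomy needed to isolate the $f$-term: the proof should bifurcate according to whether the right-hand side contribution $(\rr^m\dashint_{B_\rr}\snr{f}^m\dx)^{1/(m(p-1))}$ dominates $\tau_1^{\beta_1}\mathcal{E}(u;B_\rr)$ or not, so that \eqref{p1 2} comes out with the stated form and the correct exponent $1/(m(p-1))$ on the inhomogeneity.
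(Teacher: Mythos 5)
Your proposal takes a genuinely different route from the paper, but it runs into an obstruction that the paper's argument is designed precisely to avoid.

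You model the proof on Step~1 of Proposition~\ref{l2}: apply Lemma~\ref{l7}, invoke the Duzaar--Mingione $p$-harmonic approximation lemma \cite{dumi1} to get an $L^p$-closeness estimate for $v-h$, exploit the $C^{1,\alpha}$ decay of $p$-harmonic maps to compare $v$ to a Taylor polynomial, and then close via the degenerate Caccioppoli inequality \eqref{cccp 2} with $z_0=D\ell_{2\tau_1\rho}$. The paper does something structurally different: after Lemma~\ref{l7}, it uses the Diening--Stroffolini--Verde version of $p$-harmonic approximation \cite{dsv}, which yields a $V$-comparison in $L^{2\vartheta}$ for $\vartheta<1$; this is upgraded to an $L^2$-comparison for $V(Du)-V(D\mathfrak{h}_0)$ by a H\"older/interpolation step using the reverse H\"older inequality \eqref{revh}; the conclusion then comes directly from the $V$-excess decay estimate \eqref{p1 8} for $p$-harmonic maps, together with the equivalences \eqref{quad exc 1}--\eqref{quad exc 2}. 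No Caccioppoli inequality appears at this stage, and the rescaling $v_0=u/\kappa$ includes the $f$-term in $\kappa$ from the start, so no dichotomy on the size of the inhomogeneity is needed.

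The concrete gap in your version is the weighted $L^2$-term produced by Caccioppoli. On the right-hand side of \eqref{cccp 2} with $z_0=D\ell_{2\tau_1\rho}$ you must control
\[
\dashint_{B_{2\tau_1\rho}}\snr{D\ell_{2\tau_1\rho}}^{p-2}\left|\frac{u-\ell_{2\tau_1\rho}}{2\tau_1\rho}\right|^{2}\dx\,,
\]
and in the degenerate regime \eqref{p1 1}$_1$ the only a priori bound you have is
\[
\snr{D\ell_{2\tau_1\rho}}\leqslant \snr{D\ell_{2\tau_1\rho}-(Du)_{2\tau_1\rho}}+\snr{(Du)_{2\tau_1\rho}-(Du)_\rho}+\snr{(Du)_\rho}
\lesssim \big(\tau_1^{-n/p}+\chi^{-1}\big)\mathcal{E}(u;B_\rho),
\]
with a genuinely negative power of $\tau_1$. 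Any Young-type splitting of $\snr{D\ell_{2\tau_1\rho}}^{p-2}\snr{\cdot}^{2}$ then releases a factor $\tau_1^{-n(p-2)/p}$ (or worse), which your Taylor gain $\tau_1^{p\alpha}$ cannot absorb unless $2\alpha>n(p-2)/p$ — a condition that fails for large $p$, since $\alpha<1$. The "triangular-inequality argument already exploited in \eqref{l2 11}" cannot repair this either, because \eqref{l2 11} hinges on $\snr{(Du)_\rho}$ being \emph{large} relative to the excess (the nondegenerate scenario); here \eqref{p1 1}$_1$ says the opposite. This is exactly why the paper abandons the Caccioppoli route in the degenerate case and argues at the level of $V(Du)$: the $V$-excess $\widetilde{\mathcal{E}}$ already incorporates both the weighted $L^2$ and the $L^p$ contributions coherently, and the reverse H\"older inequality \eqref{revh} plays the role that, in your scheme, you are implicitly asking the Caccioppoli inequality to play.

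Two lesser points. First, your dichotomy "either the $f$-term dominates $\tau_1^{p\alpha}\mathcal{E}(u;B_\rho)^p$ or it is small" would, in the dominating branch, force a factor $\tau_1^{-n/p-\alpha}$ in front of the $f$-term, making the final constant depend on $\tau_1$; this contradicts the stated $c_1\equiv c_1(\data,\chi)$ and is avoided in the paper by building the $f$-term into the normalizing factor $\kappa$. Second, the decay you would obtain from the $L^p$-Taylor comparison is $\tau_1^{p\alpha}$, which after taking $p$-th roots gives an exponent $\beta_1<\alpha$, not $\beta_1<2\alpha/p$; the stated exponent $2\alpha/p$ comes from the $V$-excess decay \eqref{p1 8}, which scales differently.
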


\begin{proof}
	We adopt the same notations used in the proof of Proposition \ref{l2}. Let us begin noticing that condition \eqref{p1 1}$_1$ implies the following estimate
	\begin{equation}\label{p1 3}
    \dashint_{B_\rr}\snr{\dd u}^p \ \dx \leqslant c_\chi \mathcal{E}(u;B_\rr)^p \qquad \text{with}~c_\chi := 2^p(1+\chi^{-p}).
	\end{equation} 
	Indeed, by \eqref{eccess} and \eqref{p1 1}, we have
	\begin{eqnarray*}
		\dashint_{B_\rr}\snr{\dd u}^p \ \dx &\leqslant& 2^p\dashint_{B_\rr}\snr{\dd u-(\dd u)_{B_\rr}}^p \ \dx + 2^p\snr{(\dd u)_{B_\rr}}^p\\*[0.5ex]
		&\leqslant& 2^p \mathcal{E}(u;B_\rr)^p +\frac{2^p}{\chi^p} \mathcal{E}(u;B_\rr)^p.
	\end{eqnarray*}
	Consider now
	$$
	\kk := c_\chi \mathcal{E}(u;B_\rr) + \left( \frac{\rr^m}{\eps_3}\dashint_{B_\rr} \snr{f}^m \, \dx\right)^\frac{1}{m(p-1)} \qquad \mbox{and}~v_0 := \frac{u}{\kk},
	$$
	for $\eps_3 \in (0,1]$, which will be fixed later on. Applying \eqref{p arm} to the function $v_0$ yields  
	$$
	\modulo{ \ \dashint_{B_{\rr/2}(x_{0})} \langle\snr{\dd v_0}^{p-2}\dd v_0,\dd \F \rangle \, \dx } \stackrel{\eqref{p1 1}_2,\eqref{p1 3}}{\leqslant} c \norma{\dd \F}_\infty \left(s
	+\omega(s)^{-1}\eps_2 +\eps_3 \right).
	$$
	For any $\eps>0$ and $\vartheta \in (0,1)$ and let $\delta$ be the one given by \cite[Lemma 1.1]{dsv}. Then, up to choosing $s$, $\eps_2$ and $\eps_3$ sufficiently small, we arrive at
	$$
	c \left(s
	+\omega(s)^{-1}\eps_2 +\eps_3 \right) \leqslant\delta \|\dd \F\|_\infty^{p-1}.
	$$
	Then, Lemma 1.1 in \cite{dsv} implies
	\[
	\left( \ \dashint_{B_{\rr/2}} \snr{V(\dd v_0)-V(\dd h)}^{2\vartheta} \, \dx \right)^\frac{1}{\vartheta} \leqslant c \eps \  \dashint_{B_{\rr/2}}\snr{\dd u}^p \, \dx \stackrel{\eqref{p1 3},\eqref{p1 1}_2}{\leqslant} c \eps\eps_2^p,
	\]
	up to taking $\eps$ as small as needed. Now, denoting with $\mathfrak{h}_0:=h\kappa$, we have that
    \[
	\left( \ \dashint_{B_{\rr/2}} \snr{V(\dd u)-V(\dd \mathfrak{h}_0)}^{2\vartheta} \, \dx \right)^\frac{1}{\vartheta} \leqslant \eps\eps_2^p \kk^p.
    \]
	Now, we choose $\vartheta:=(\mathfrak{s})'/2$, with $\mathfrak{s}$ being the exponent given by \eqref{revh}. Note that by the proof of \eqref{revh} it actually follows that $\vartheta <1$. Thus, choosing $\eps\eps_2^p \kk^p \leqslant\tau_1^{2n+4\alpha}$ (where $\alpha \in (0,1)$ is given by \eqref{p1 8}) we arrive at
	$$
	\left( \ \dashint_{B_{\rr/2}} \snr{V(\dd u)-V(\dd \mathfrak{h}_0)}^{(\mathfrak{s})'} \, \dx \right)^\frac{1}{(\mathfrak{s})'} \leqslant c \, \tau_1^{n+2\alpha}.
	$$
	By H\"older's Inequality, we have that
	\begin{eqnarray}\label{p1 4}
		&& \dashint_{B_{\rr/2}} \snr{V(\dd u)-V(\dd \mathfrak{h}_0)}^{2}  \dx \\*[0.5ex]
		&&\quad\leqslant\left( \ \dashint_{B_{\rr/2} } \snr{V(\dd u)-V(\dd \mathfrak{h}_0)}^{(\mathfrak{s})'} \, \dx \right)^\frac{1}{(\mathfrak{s})'}\times \notag \\*[0.5ex]
		&&\qquad \times\left( \ \dashint_{B_{\rr/2}} \snr{V(\dd u)-V(\dd \mathfrak{h}_0)}^{\mathfrak{s}} \, \dx \right)^\frac{1}{\mathfrak{s}} \nonumber.
	\end{eqnarray}
	Hence, since by \eqref{equiv.1} $V(z) \approx \snr{z}^p$, an application of estimates \eqref{revh} and \eqref{p1 3} now yields
	\begin{eqnarray}\label{p1 5}
		\left( \ \dashint_{B_{\rr/2}} \snr{V(\dd u)}^{\mathfrak{s}} \, \dx \right)^\frac{1}{\mathfrak{s}} &\leqslant& c\left( \ \dashint_{B_{\rr/2}} \snr{\dd u-(\dd u)_\rr}^{p_2} \, \dx \right)^\frac{p}{p_2} + c \snr{(\dd u)_\rr}^p \nonumber\\*[0.5ex]
		&\leqslant& c \ \dashint_{B_\rr} \snr{\dd u}^p \dx + c \left( \rr^m \dashint_{B_\rr}\snr{f}^m \dx \right)^\frac{p}{m(p-1)} +c\snr{(\dd u )_\rr}^p\nonumber\\*[0.5ex]
		&\leqslant& c \, \mathcal{E}(u;B_\rr)^p + c \left( \rr^m \dashint_{B_\rr}\snr{f}^m \dx \right)^\frac{p}{m(p-1)},
	\end{eqnarray}
	with $c \equiv c(\data,\chi)$.
	
	On the other hand, by classical properties of $p$-harmonic functions, we have that
	\begin{equation}\label{p1 6}
     \left( \ \dashint_{B_{\rr/2}} \snr{V(\dd \mathfrak{h}_0)}^{\mathfrak{s}} \, \dx \right)^\frac{1}{\mathfrak{s}} \leqslant c \ \dashint_{B_\rr}\snr{\dd \mathfrak{h}_0}^p \dx \leqslant c \ \dashint_{B_\rr}\snr{ \dd u }^p \dx \leqslant c \, \mathcal{E}(u;B_\rr)^p.
	\end{equation}
	Hence, combining \eqref{p1 4}, \eqref{p1 5} and \eqref{p1 6}, we get that
	\begin{eqnarray}\label{p1 7}
     &&	\dashint_{B_{\rr/2}} \snr{V(\dd u)-V(\dd \mathfrak{h}_0)}^{2}  \dx \notag \\*[0.5ex]
     && \quad \leqslant c \, \tau_1^{n+2\alpha} \mathcal{E}(u;B_\rr)^p +c \, \tau_1^{n+2\alpha} \left( \rr^m \dashint_{B_\rr} \snr{f}^m \dx \right)^\frac{p}{m(p-1)}
	\end{eqnarray}
	Let us recall that, for any $\tau_1 \in (0,2^{-10})$,  given the $p$-harmonic function $\mathfrak{h}_0$ we have
	\begin{equation}\label{p1 8}
	\widetilde{\mathcal{E}}(\mathfrak{h}_0;B_{\tau_1 \rr})^2 \leqslant c \tau_1^{2\alpha } \kk^p, \qquad \alpha \equiv \alpha(n,N,p) \in (0,1).
	\end{equation}
	Moreover, using Jensen's Inequality we can estimate the following difference as follows
	\begin{eqnarray*}
		\snr{(\dd u)_{\tau_1 \rr}- (\dd u)_\rr} &\leqslant&  \left( \ \dashint_{B_{\tau_1\rr}} \snr{\dd u -(\dd u)_\rr}^p \dx \right)^\frac{1}{p}\\*[0.5ex]
		&\leqslant&  \tau_1^{-\frac{n}{p}} \left( \ \dashint_{B_\rr} \snr{\dd u -(\dd u)_\rr}^p \dx \right)^\frac{1}{p}  \\*[0.5ex]         &\stackrel{\eqref{eccess},\eqref{p1 1}_2}{\leqslant}&  \tau_1^{-\frac{n}{p}}\eps_2.
	\end{eqnarray*}
	Thus, up to taking $\eps_2$ sufficiently small, by the triangular inequality, we obtain that $\frac{1}{2}\snr{(\dd u)_{\tau_1 \rr}} \leqslant\snr{(\dd u)_\rr} \leqslant2 \snr{(\dd u)_{\tau_1 \rr}}$. Hence, \eqref{Vm} yield 
	$$
	\snr{V_{\snr{(\dd u)_{\tau_1 \rr}}}(\cdot)}^2 \approx \snr{V_{\snr{(\dd u)_\rr}} (\cdot)}^2,
	$$
	and
	$$
	\snr{V((\dd u)_{\tau_1 \rr})-V((\dd u)_{\rr})}^2 \approx \snr{V_{\snr{(\dd u)_{\rr}}}\big((\dd u)_{\rr}-(\dd u)_{\tau_1 \rr}\big)}^2.
	$$
	Then,
	\begin{eqnarray*}
		\mathcal{E}(u;B_{\tau_1 \rr})^p &\stackrel{\eqref{quad exc 1}}{\leqslant}& c\, \widetilde{\mathcal{E}}(u;B_{\tau_1 \rr})^2\\*[0.5ex]
		&\stackrel{\eqref{quad exc 2}}{\leqslant}& c \, \dashint_{B_{\tau_1\rr}} \snr{V(\dd u)-V((\dd u)_{\tau_1\rr})}^2 \dx\\*[0.5ex]
		&\leqslant& c \, \tau_1^{-n} \ \dashint_{B_{\rr/2}} \snr{V(\dd u)-V(\dd \mathfrak{h}_0)}^2 \dx \\*[0.5ex]
		&& + c \ \dashint_{B_{\tau_1\rr}} \snr{V(\dd \mathfrak{h}_0)-V((\dd \mathfrak{h}_0)_{\tau_1\rr})}^2 \dx\\*[0.5ex]
		&\stackrel{\eqref{quad exc 2}}{\leqslant}& c \, \tau_1^{-n} \ \dashint_{B_{\rr/2}} \snr{V(\dd u)-V(\dd \mathfrak{h}_0)}^2 \dx + c \ \widetilde{\mathcal{E}}(\mathfrak{h}_0,B_{\tau_1\rr}) \\*[0.5ex]
		&\stackrel{\eqref{p1 7},\eqref{p1 8}}{\leqslant}& c \, \tau_1^{2\alpha} \mathcal{E}(u;B_\rr)^p+ c  \left(\rr^m \dashint_{B_\rr}\snr{f}^m \dx \right)^\frac{p}{m(p-1)},
	\end{eqnarray*}
	and the desired estimate \eqref{p1 2} follows.
\end{proof}

\section{Proof of the main result}\label{proof_of_the_main}
This section is devoted to the proof of Theorem \ref{t1}. First, we prove the following proposition.

\begin{proposition}\label{loc_BMO_prop}
	Under assumptions \eqref{assf}$_{1,2,3}$, \eqref{sqc} and \eqref{f}, let $u\in W^{1,p}(\Omega,\mathds{R}^{N})$ be a local minimizer of functional \eqref{main_fnc}. Then, there exists~$\eps_* \equiv \eps_*(\data) >0$ such that if the following condition 
	\begin{equation}\label{main_thm_prop_1}
		\mathcal{E}(\dd u; B_r) + \sup_{\rr \leqslant r} \left( \rr^m \dashint_{B_\rr} \snr{f}^m \dx \right)^\frac{1}{m(p-1)} < \eps,
	\end{equation}
	is satisfied on $B_r\subset \Omega$, for some $\eps \in (0,\eps_*]$,  then
	\begin{equation}\label{main_thm_prop_2}
		\sup_{\rr \leqslant r} \mathcal{E}(\dd u; B_\rr) < c_3\, \eps,
	\end{equation}
	for $c_3 \equiv c_3(\data)>0$.
\end{proposition}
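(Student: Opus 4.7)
The plan is to prove~\eqref{main_thm_prop_2} by iterating the two excess decay estimates of Propositions~\ref{l2} and~\ref{p1} along a geometric sequence of concentric balls $B_{\rho_k}$ with $\rho_k := \tau^k r$, where $\tau \in (0, 2^{-10})$ is a common scaling parameter. The dichotomy is exactly the one dictated by the two propositions: at each scale one checks whether the \emph{nondegenerate} condition $\mathcal{E}(u;B_{\rho_k}) \le \eps_0 |(\dd u)_{B_{\rho_k}}|$ holds (then apply Proposition~\ref{l2}) or not (then apply Proposition~\ref{p1} with $\chi = \eps_0$, so that the two cases are perfectly complementary).

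The first step would be to choose $\tau := \tau_0 = \tau_1$ small enough to guarantee $c_0 \tau^{\beta_0} \le 1/4$ and $c_1 \tau^{\beta_1} \le 1/4$, for some fixed $\beta_0 \in (0, 2/p)$ and $\beta_1 \in (0, 2\alpha/p)$. This determines the corresponding thresholds $\eps_0, \eps_1, \eps_2$ as functions of $\data$ only. Then I would fix $\eps_*$ so small that, under the inductive hypothesis $\mathcal{E}(u;B_{\rho_k}) \le M\eps$ (for a suitable $M \equiv M(\data)$ to be determined), the required smallness conditions are all met: the absolute bound $\mathcal{E}(u;B_{\rho_k}) \le \eps_2$ used in Proposition~\ref{p1}, and the internal smallness~\eqref{l2 12} used in the proof of Proposition~\ref{l2}.

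With this setup, in the nondegenerate case either~\eqref{l2 3} holds, in which case~\eqref{l2 4} gives $\mathcal{E}(u;B_{\rho_{k+1}}) \le \tfrac{1}{4}\mathcal{E}(u;B_{\rho_k})$; or~\eqref{l3 1} holds, in which case~\eqref{l3 2} dominates the excess at scale $\rho_{k+1}$ directly by the $f$-term at scale $\rho_k$. In the degenerate case, Proposition~\ref{p1} yields~\eqref{p1 2}. In all scenarios, one arrives at the single telescoping inequality
$$
\mathcal{E}(u;B_{\rho_{k+1}}) \le \tfrac{1}{4}\mathcal{E}(u;B_{\rho_k}) + c\left(\rho_k^m \dashint_{B_{\rho_k}}|f|^m \dx\right)^{\frac{1}{m(p-1)}}.
$$
Summing the geometric series, one finds
$$
\sup_{k\ge 0}\mathcal{E}(u;B_{\rho_k}) \le c\,\mathcal{E}(u;B_r) + c\sup_{\rho \le r}\left(\rho^m \dashint_{B_\rho}|f|^m \dx\right)^{\frac{1}{m(p-1)}} \le c_3\,\eps,
$$
which is~\eqref{main_thm_prop_2} along the dyadic scales $\rho_k$. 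To upgrade the sup-bound to arbitrary $\rho \le r$, I would compare $\rho \in (\rho_{k+1}, \rho_k]$ with $\rho_k$ through the elementary scaling $\mathcal{E}(u;B_\rho) \le c(\tau)\mathcal{E}(u;B_{\rho_k})$, obtained by enlarging the integration domain in~\eqref{eccess}, at the cost of absorbing a factor $\tau^{-n/p}$ into the final constant $c_3$.

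The main obstacle I anticipate is the simultaneous choice of the constants $M$ and $\eps_*$: the induction must close in the sense that the bound $\mathcal{E}(u;B_{\rho_k}) \le M\eps_*$ is preserved at the next scale while at the same time implying all the hypotheses of both Propositions~\ref{l2} and~\ref{p1} (in particular the absolute smallness~\eqref{l2 12} internal to the nondegenerate case, and~\eqref{p1 1}$_2$). This is a careful bookkeeping exercise that ultimately hinges on the decay factor $1/4$ in the telescoping inequality being strictly less than $1$, which makes the geometric series absolutely convergent and yields the uniform constant $c_3$.
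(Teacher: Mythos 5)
Your proposal is correct and follows essentially the same route as the paper: iterate Propositions~\ref{l2} and~\ref{p1} along the dyadic scales $\tau^k r$ using the complementary dichotomy $\mathcal{E}(u;B_{\rho_k}) \lessgtr \eps_0|(\dd u)_{B_{\rho_k}}|$ with $\chi = \eps_0$, derive the single telescoping inequality with decay factor $1/4$, sum the geometric series, and interpolate to arbitrary radii. The only cosmetic difference is that the paper chooses $\tau$ so that $(c_0+c_1)\tau^{\beta_m-\beta}\le 1/4$, retaining an extra $\tau^\beta$ in the decay factor and hence a power-decay estimate \eqref{ecc_dec_bmo}, whereas you keep the plain $1/4$; both yield \eqref{main_thm_prop_2}.
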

\begin{proof}
   For the sake of readability, since all balls considered in the proof are concentric to $B_r(x_0)$, we will omit denoting the center.
	
	Let us start fixing an exponent~$\beta \equiv \beta(\alpha,p)$ such that
	\begin{equation}\label{beta}
		0 < \beta < \min\{\beta_0,\beta_1\}=:\beta_m,
	\end{equation}
	where $\beta_0$ and$\beta_1$ are the exponents appearing in  Proposition \ref{l2} and Proposition \ref{p1}. Moreover, given the constant $c_0$ and $c_1$ from Proposition \ref{l2} and Proposition \ref{p1}, choose $\tau \equiv \tau(\data,\beta)$ such that
	\begin{equation}\label{tau}
		(c_0+c_1) \tau^{\beta_m -\beta} \leqslant\frac{1}{4}.
	\end{equation}
   With the choice of~$\tau_0$ as in~\eqref{tau} above, we can determine the constant~$\eps_0$ and~$\eps_1$ of Proposition~\ref{l2}. Now, we proceed applying Proposition~\ref{p1} taking~$\chi \equiv \eps_0$  and~$\tau_1$ as in~\eqref{tau} there. This determines the constant~$\eps_2$ and~$c_2$. We consider a ball~$B_r \subset \Omega$ such that
	\begin{equation}\label{bound_exc}
		\mathcal{E}(\dd u; B_r) <\eps_2,
	\end{equation}
	and 
	\begin{equation}\label{bound_data_f}
		\sup_{\rr \leqslant r} c_2 \left(\rr^m \dashint_{B_\rr} \snr{f}^m \dx\right)^\frac{1}{m(p-1)} \leqslant\frac{\eps_2}{4},
	\end{equation}
	where the constant $c_2 := c_1 +c_0$, with $c_0$ appearing in \eqref{l3 2} and $c_1$ in \eqref{p1 2}. In particular, see that by~\eqref{bound_exc} and~\eqref{bound_data_f} we are in the case when~\eqref{main_thm_prop_1} does hold true.	
	
	Now, we recall Proposition~\ref{p1}.  Seeing that~$\eqref{p1 1}_2$ is satisfied (being~\eqref{bound_exc}) we only check whether~$\eqref{p1 1}_1$ is verified too. If $\eps_0 \snr{(\dd u )_{B_r}} \leqslant\mathcal{E}(\dd u; B_r)$ is satisfied then we obtain from \eqref{p1 2}, with $\tau_1 \equiv \tau$ in \eqref{tau} that
	\begin{eqnarray}\label{main_thm_prop_3}
		\mathcal{E}(u;B_{\tau r}) &\leqslant& \frac{\tau^{\beta}}{4}\mathcal{E}(u;B_r) +c_2 \left(r^m \dashint_{B_r} \snr{f}^m \ \dx \right)^\frac{1}{m(p-1)} \nonumber\\*[0.5ex]
		&\leqslant& \frac{\tau^{\beta}}{4}\mathcal{E}(u;B_r) + \sup_{\rr \leqslant r}c_2 \left(\rr^m \dashint_{B_\rr} \snr{f}^m \ \dx \right)^\frac{1}{m(p-1)}\nonumber\\*[0.5ex]
		&\leqslant& \frac{\tau^{\beta}}{4}\mathcal{E}(u;B_r) + \frac{\eps_2}{4} \leqslant\eps_2,
	\end{eqnarray}
	where the last inequality follows from \eqref{bound_exc} and \eqref{bound_data_f}. If on the other hand it holds $\eps_0 \snr{(\dd u )_{B_r}} \geqslant  \mathcal{E}(\dd u; B_r)$, by Proposition \ref{l2}, then by \eqref{l2 4} or \eqref{l3 2} we eventually arrive at the same estimate \eqref{main_thm_prop_3}.
	
	Iterating now the seam argument we arrive at
	$$
	\mathcal{E}(\dd u ; B_{\tau^j r}) < \eps_2 \qquad \mbox{for any } j \geqslant 0,
	$$
	and the estimate 
	$$
	\mathcal{E}(u;B_{\tau^{j+1} r}) \leqslant \frac{\tau^{\beta}}{4}\mathcal{E}(u;B_{\tau^j r}) +c_2 \left((\tau^j r)^m \dashint_{B_{\tau^j r}} \snr{f}^m \ \dx \right)^\frac{1}{m(p-1)},
	$$
	holds true. By the inequality above we have that for any $k \geqslant 0$ 
	\begin{eqnarray*}
		\mathcal{E}(u;B_{\tau^{k+1} r}) &\leqslant&  \frac{\tau^{\beta(k +1)}}{4}\mathcal{E}(u;B_{r}) +c_2 \sum_{j=0}^{k} (\tau^\beta)^{j-k} \left((\tau^j r)^m \dashint_{B_{\tau^j r}} \snr{f}^m \ \dx \right)^\frac{1}{m(p-1)} \\*[0.5ex]
		&\leqslant&  \tau^{\beta(k +1)}\mathcal{E}(u;B_{r}) +c_2 \sup_{\rr \leqslant r} \left(\rr^m \dashint_{B_rr} \snr{f}^m \dx\right)^\frac{1}{m(p-1)}.
	\end{eqnarray*}
    Applying a standard interpolation argument we conclude that, for any~$t \leqslant r$, it holds
    \begin{equation}\label{ecc_dec_bmo}
    \mathcal{E}(\dd u, B_s) \leqslant c_3 \left(\frac{s}{r}\right)^\beta\mathcal{E}(\dd u, B_r) +c_3 \sup_{\rr \leqslant r} \left(\rr^m \dashint_{B_rr} \snr{f}^m \dx\right)^\frac{1}{m(p-1)}\,,
    \end{equation}
   where~$c_3 \equiv c_3(\data)$. The desired estimate \eqref{main_thm_prop_2} now follows.
\end{proof}

\begin{proof}[Proof of Theorem~{\rm\ref{t1}}]
   We proceed following the same argument used  in~\cite[Theorem~1.5]{kumi}. We star proving that, for any~$1\leqslant m < n$ and any~$\mathscr{O} \subset \Omega$, with positive measure, we have that
   \begin{equation}\label{jensen}
   \|f\|_{L^{m}(\mathscr{O})} \leqslant\left(\frac{n}{n-m}\right)^{1/m}\snr{\mathscr{O}}^{1/m-1/n}\|f\|_{L^{n,\infty}(\mathscr{O})}.
   \end{equation}
   Indeed, choose~$\bar{\lambda}$ which will be fixed later on. Then, we have that
   \begin{equation}\label{jensen 2}
      \|f\|_{L^m(\mathscr{O})}^m = m \int_0^{\bar{\lambda}}\lambda^m \snr{\{x \in \mathscr{O}:\snr{f}>\lambda\}}\frac{{\rm d}\lambda}{\lambda} +m \int_{\bar{\lambda}}^\infty\lambda^m \snr{\{x \in \mathscr{O}:\snr{f}>\lambda\}}\frac{{\rm d}\lambda}{\lambda}.
    \end{equation}
   The first integral on the righthand side of~\eqref{jensen 2} can be estimated in the following way
   \[
      \int_0^{\bar{\lambda}}\lambda^m\snr{\{x \in \mathscr{O}:\snr{f}>\lambda\}}\frac{{\rm d}\lambda}{\lambda} \leqslant\frac{\bar{\lambda}^m\snr{\mathscr{O} } }{m}.
   \]
   On the other hand, the second integral can be estimated recalling the definition of the $L^{n,\infty}(\mathscr{O})$-norm. Indeed,
   \[
    \int_{\bar{\lambda}}^\infty\lambda^m \snr{\{x \in \mathscr{O}:\snr{f}>\lambda\}}\frac{{\rm d}\lambda}{\lambda} \leqslant\|f\|_{L^{n,\infty}(\mathscr{O})}^n \int_{\bar{\lambda}}^\infty \frac{{\rm d}\lambda}{\lambda^{1+n-m}} \leqslant\frac{\|f\|_{L^{n,\infty}(\mathscr{O})}^n}{(n-m)\bar{\lambda}^{n-m}}.
   \]
   Hence, putting all the estimates above in~\eqref{jensen 2}, choosing~$\bar{\lambda}:=\|f\|_{L^{n,\infty}(\mathscr{O})}/\snr{\mathscr{O}}^{1/n}$, we obtain~\eqref{jensen}.

   \vspace{2mm}
   Now, recalling condition~\eqref{est_Lninfty} we have that
   \begin{eqnarray*}
   \left(\rr^m \dashint_{B_\rr}\snr{f}^m \, \dx\right)^{1/m} &\leqslant& \left(\frac{n}{n-m}\right)^{1/m}\snr{B_1}^{-1/n}\|f\|_{L^{n,\infty}(\Omega)}\notag\\*[0.5ex]
   &\stackrel{\eqref{f}}{\leqslant}& \left(\frac{4^{n/m}}{\snr{B_1}}\right)^{1/n}\|f\|_{L^{n,\infty}(\Omega)}
    \stackrel{\eqref{est_Lninfty}}{\leqslant} \eps_*\,,
   \end{eqnarray*}
  where~$\eps_*$ is the one obtained in the proof of Proposition~\ref{loc_BMO_prop}. From this it follows that, we can choose a radius~$\rr_1$ such that
   \begin{equation}\label{est_f}
   \sup_{\rr \leqslant\rr_1}c_2\left(\rr^m \dashint_{B_\rr(x)}\snr{f}^m \, \dx\right)^{1/m(p-1)} \leqslant\frac{\eps_*}{4c_3}.
   \end{equation}
   We want to show that the set~$\Omega_u$ appearing in~\eqref{t1 1} can be characterized by
   \[
   \Omega_u:= \left\{x_0 \in \Omega: \, \exists B_\rr(x_0)\Subset \Omega \, \text{with}~\rr \leqslant\rr_1 \, : \mathcal{E}(\dd u, B_\rr(x_0)) < \eps_*/(4c_3)\right\},
   \]
   thus fixing~$\rr_{x_0}:=\rr_1$ and~$\eps_{x_0}:= \eps_*/(4c_3)$.    We first star noting that the the set~$\Omega_u$ defined in~\eqref{t1 2} is such that~$\snr{\Omega \smallsetminus \Omega_u}=0$. Indeed, let us consider the set
   \begin{equation}\label{L_u}
    \mathscr{L}_u:= \left\{x_0 \in \Omega: \liminf_{\rr \to 0} \widetilde{\mathcal{E}}(u;B_\rr(x_0))^2=0\right\},
   \end{equation}
   which is such that~$\snr{\Omega \smallsetminus \mathscr{L}_u}=0$ by standard Lebesgue's Theory. Moreover, by~\eqref{quad exc 1} it follows that
   \[
       \mathscr{L}_u:= \left\{x_0 \in \Omega: \liminf_{\rr \to 0} \mathcal{E}(u;B_\rr(x_0))=0\right\},
   \]
  so that, $\mathscr{L}_u \subset \Omega_u$ and we eventually obtained that~$\snr{\Omega \smallsetminus \Omega_u}=0$. Now we show that~$\Omega_u$ is open. Let us fix~$x_0 \in \Omega_u$ and find a radius~$\rr_{x_0} \leqslant\rr_{1}$ such that
  \begin{equation}\label{est_ecc}
  \mathcal{E}(\dd u, B_{\rr_{x_0}}(x_0)) < \frac{\eps_*}{4c_3}.
  \end{equation}
  By absolute continuity of the functional~$\mathcal{E}(\cdot)$ we have that there exists an open neighbourhood~$\mathscr{O}(x_0)$ such that, for any~$x \in \mathscr{O}(x_0)$ it holds
 \begin{equation}\label{ecc_est}
    \mathcal{E}(\dd u, B_{\rr_{x_0}}(x)) < \frac{\eps_*}{4c_3} \quad \text{and}~B_{\rr_{x_0}}(x) \Subset \Omega.
  \end{equation}
  This prove that~$\Omega_u$ is open. Now let us start noting that~\eqref{est_f} and~\eqref{ecc_est} yield  that condition~\eqref{main_thm_prop_1} is satisfied  with~$B_r \equiv B_{\rr_{x_0}}(x)$. Hence, an application of Proposition~\ref{loc_BMO_prop} yields 
  \[
  \sup_{t \leqslant\rr_{x_0}}\mathcal{E}(\dd u, B_t(x)) < \eps_*,
  \]
 for any~$x \in \mathscr{O}(x_0)$. Thus concluding the proof.
\end{proof}

\vspace{5mm}


\begin{thebibliography}{99}
	
	\bibitem{AF84} {\sc E. Acerbi, N. Fusco}: Semicontinuity problems in the calculus of variations. {\it Arch. Rational Mech. Anal.} 86, no. 2, 125--145 (1984).
	\vs 
	
	\bibitem{AF87} {\sc E. Acerbi, N. Fusco}: A regularity theorem for minimizers of quasiconvex integrals. {\it Arch. Rational Mech. Anal.}  99 (1987), no. 3, 261--281.
	\vs

 \bibitem{AM01} {\sc E. Acerbi, G. Mingione}: Regularity results for a class of quasiconvex functionals with nonstandard growth. {\it Ann. Sc. Norm. Super. Pisa Cl. Sci.} (5) 30, 311--339, (2001).

 \vs
	
	\bibitem{BM84} {\sc J.~M. Ball, F. Murat}: $W^{1,p}$-quasiconvexity and variational problems for multiple integrals. {\it J. Funct. Anal.} 58, no. 3, 225--253 (1984). 
	\vs

\bibitem{bgik} {\sc M. B\"arlin, F. Gmeineder, C. Irving, J. Kristensen}: $\mathcal{A}$-harmonic approximation and partial regularity, revisited. \emph{Preprint} (2022). \href{https://arxiv.org/abs/2212.12821}{arXiv:2212.12821}
\vs

\bibitem{ba2} {\sc P. Baroni}: Riesz potential estimates for a general class of quasilinear equations, 
\emph{Calc. Var. \& PDE} 53, 803-846, (2015).
\vs

\bibitem{by} {\sc S.-S. Byun, Y. Youn}: Potential estimates for elliptic systems with subquadratic growth. \emph{J. Math. Pures Appl.} 131, 193-224, (2019).
\vs
 
    \bibitem{CFM98}	{\sc M. Carozza, N. Fusco, G. Mingione}: Partial regularity of minimizers of quasiconvex integrals with subquadratic growth. {\it Ann. Mat. Pura Appl.} (4) 175,  141--164 (1998).
	\vs

\bibitem {CiGA} {\sc A. Cianchi}: Maximizing the $L^\infty$-norm of the gradient of solutions to the Poisson equation. \emph{J. Geom. Anal.} 2, 499-515, (1992).

\vs

\bibitem{cm} {\sc A. Cianchi, V. G. Maz'ya}: Optimal second-order regularity for the $p$-Laplace system. \emph{J. Math. Pures Appl.} 132, 41--78, (2019).

\vs
 \bibitem{cm1} {\sc A. Cianchi, V. G. Maz'ya}: Global boundedness of the gradient for a class of nonlinear elliptic systems. \emph{Arch. Ration. Mech. Anal.} 212, 1, 129-177, (2014).
\vs

	\bibitem{dqc} {\sc C. De Filippis}: Quasiconvexity and partial regularity via nonlinear potentials. \emph{J. Math. Pures Appl.} (9) 163,  11--82 (2022).
	\vs
\bibitem{ds} {\sc C. De Filippis, B. Stroffolini}: Singular multiple integrals and nonlinear potentials. \emph{J. Funct. Anal.} 285(2), 109952, (2023).

 \vs
	
	\bibitem{de} {\sc L. Diening, F. Ettwein}: Fractional estimates for non-differentiable elliptic systems with general growth, \emph{Forum Math.} 20, 3, 523--556 (2008).
	\vs
	
	\bibitem{dlsv} {\sc L. Diening, D. Lengeler, B. Stroffolini, A. Verde}: Partial regularity for minimizers of quasi-convex functionals with general growth. \emph{SIAM J. Math. Anal.} 44, 5, 3594--3616 (2012).
	\vs
	
	\bibitem{dsv} {\sc L. Diening, B. Stroffolini, A. Verde}: The $\F$-harmonic approximation and the regularity of $\F$-harmonic maps. \emph{J. Differential Equations} 253, 1943--1958 (2012).
	\vs
\bibitem{dz} {\sc H. Dong, H. Zhu}: Gradient estimates for singular $p$-Laplace type equations with measure data. \emph{Preprint} (2021). \href{https://arxiv.org/pdf/2102.08584.pdf}{arXiv:2102.08584}


 \vs
 
	\bibitem{dumi} {\sc F. Duzaar, G. Mingione}: Regularity for degenerate elliptic problems via $p$-harmonic approximation. \emph{Ann. Inst. H. Poincar\'e Anal. Non Lin\'eaire}  21, 735--766 (2004).
	\vs

 \bibitem{dumi1} {\sc F. Duzaar, G. Mingione}: The $p$-harmonic approximation and the regularity of $p$-harmonic maps. \emph{Calc. Var. Partial Differential Equations} 20, 235--256, (2004).
	\vs

\bibitem{dust} {\sc F. Duzaar, K. Steffen}: Optimal interior and boundary regularity for almost minimizers to elliptic variational integrals. \emph{J. reine angew. Math.} 546, 73-138, (2002).
\vs
 
	\bibitem{Eva86} {\sc L.~C. Evans}: Quasiconvexity and partial regularity in the calculus of variations. {\it Arch. Rational Mech. Anal.} 95, no. 3, 227--252 (1986).
	\vs 
	
	\bibitem{gm86} {\sc M. Giaquinta, G. Modica}: Remarks on the regularity of the minimizers of certain degenerate functionals. \emph{Manuscripta Math.} 57, 55--99 (1986).
	\vs
	
	\bibitem{giu} {\sc E. Giusti}: \emph{Direct Methods in the calculus of variations}. {World Scientific Publishing Co., Inc., River Edge.} (2003).
	\vs

 \bibitem{gme} {\sc F. Gmeineder}: Partial regularity for symmetric quasiconvex functionals on $BD$. \emph{J. Math. Pures Appl.} 145, 83--129, (2021).

 \vs
 
\bibitem{gm1} {\sc F. Gmeineder}: The Regularity of Minima for the Dirichlet Problem on $BD$. \emph{Arch. Rational Mech. Anal.} 1099–-1171, 237(3), (2020).

\vs 

\bibitem{gk} {\sc F. Gmeineder, J. Kristensen}: Partial Regularity for BV Minimizers. \emph{Arch. Rational Mech. Anal.} 232, 1429--1473, (2019).

\vs

\bibitem{gkpq} {\sc F. Gmeineder, J. Kristensen}: Quasiconvex functionals of $(p,q)$-growth and the partial regularity of relaxed minimizers. \emph{Preprint} (2022). \href{https://arxiv.org/pdf/2209.01613.pdf}{arXiv:2209.01613}

 \vs

\bibitem{k} {\sc J. Kristensen}: On the nonlocality of quasiconvexity. {\it Ann. Inst. H. Poincar\'e Anal. Non Lin\'eaire}  16, 1, 1--13, (1999). 

 \vs
	\bibitem{KM07} {\sc J. Kristensen, G. Mingione}: The singular set of Lipschitzian minima of multiple integrals. {\it Arch. Ration. Mech. Anal.} 184, 341--369 (2007).
 
	\vs

 \bibitem{kumig} {\sc T. Kuusi, G. Mingione} Guide to nonlinear potential estimates. \emph{Bull. Math. Sci.} 4, 1-82, (2014).
 
 \vs

 \bibitem{kumil} {\sc T. Kuusi, G. Mingione}: Linear potentials in nonlinear potential theory. \emph{Arch. Ration. Mech. Anal.} 207,  215-246, (2013).

 \vs
	
	\bibitem{kumi} {\sc T. Kuusi, G. Mingione}: Partial regularity and potentials. \emph{J. \'Ecole Polytechnique Math.} 3, 309--363 (2016).
	\vs
	
\bibitem{kumi0} {\sc T. Kuusi, G. Mingione}: Vectorial nonlinear potential theory. \emph{J. Eur. Math. Soc.} 20, 929--1004 (2018).
          \vs
	
	\bibitem{ma3} {\sc P. Marcellini}: Approximation of quasiconvex functions, and lower semicontinuity of multiple integrals. \emph{Manuscripta Math.} 51, 1--3 (1985).
	\vs

\bibitem{ma2} {\sc P. Marcellini}: On the definition and the lower semicontinuity of certain quasiconvex integrals. {\it Ann. Inst. H. Poincar\'e Anal. Non Lin\'eaire}  3, nr. 5, 391-409, (1986).

\vs

\bibitem{ma5} {\sc P. Marcellini}: The stored-energy for some discontinuous deformations in nonlinear elasticity. \emph{Partial Differential Equations and the Calculus of Variations} vol. II, Birkh\"auser Boston Inc., (1989).

\vs
 
	\bibitem{Mor57} {\sc C.~B. Morrey}: Quasi-convexity and the lower semicontinuity of multiple integrals. {\it Pacific J. Math.} 2, 25--53 (1952).
	\vs 


\bibitem{musv} {\sc S. M\"uller, V. \v{S}ver\'ak}: Convex integration for Lipschitz mappings and counterexamples to regularity. {\em Ann. of Math. (2)} 157, 715--742, (2003). 

\vs

\bibitem{tn} {\sc Q.-H. Nguyen, N. C. Phuc}: A comparison estimate for singular $p$-Laplace equations and its consequences. \emph{Arch. Ration. Mech. Anal.} 247:49, (2003).

\vs
 
	\bibitem{Sch08} {\sc T. Schmidt}: Regularity theorems for degenerate quasiconvex energies with $(p,q)$-growth. {\it Adv. Calc. Var.} 1, no. 3, 241--270 (2008).
	\vs
	
	\bibitem{schm} {\sc T. Schmidt}:  Regularity of relaxed minimizers of quasiconvex variational integrals with $(p, q)$-growth, \emph{Arch. Rational Mech. Anal.} 193, 311--337 (2009).
	\vs
	
\end{thebibliography}
\end{document}